\numberwithin{equation}{section}
\theoremstyle{plain}
\newtheorem{theorem}{Theorem}[section]
\newtheorem{lemma}[theorem]{Lemma}
\newtheorem{proposition}[theorem]{Proposition}
\newtheorem{problem}[theorem]{Problem}
\theoremstyle{definition}
\theoremstyle{remark}
\newtheorem{remark}[theorem]{Remark}
\newcommand{\R}{{\mathbb R}}
\renewcommand{\P}{{\bf P}}
\newcommand{\Pb}{{\mathbb P}}
\newcommand{\Eb}{{\mathbb E}}
\newcommand{\Z}{\mathbb{Z}}
\renewcommand{\L}{\Lambda}
\newcommand{\lr}[4]{#3\mathrel{\mathop{\longleftrightarrow}_{#1}^{#2}} #4}
\newcommand{\nlr}[4]{#3\mathrel{\mathop{\centernot\longleftrightarrow}_{#1}^{#2}} #4}
\def\@setcopyright{}
\def\serieslogo@{}
\begin{document}
	\title{Existence of phase transition for percolation using the Gaussian Free Field}
	
	\author{
		Hugo Duminil-Copin\footnotemark[1]\footnote{Universit\'e de Gen\`eve} \footnotemark[2]\footnote{Institut des Hautes \'Etudes Scientifiques} , Subhajit Goswami\footnote{School of Mathematics, Tata Institute of Fundamental Research} , Aran Raoufi\footnotemark[2] , Franco Severo\footnotemark[1] \footnotemark[2] , Ariel Yadin\footnote{Ben-Gurion University of the Negev}}
	
	\date{\today}
	
	\maketitle
	
	\begin{abstract}
		In this paper, we prove that Bernoulli percolation on bounded degree graphs with isoperimetric dimension $d>4$ undergoes a non-trivial phase transition (in the sense that $p_c<1$). As a corollary, we obtain that the critical point of Bernoulli percolation on infinite quasi-transitive graphs (in particular, Cayley graphs) with super-linear growth is strictly smaller than 1, thus answering a conjecture of Benjamini and Schramm. The proof relies on a new technique consisting in expressing certain functionals of the Gaussian Free Field (GFF) in terms of connectivity probabilities for a percolation model in a random environment. Then, we integrate out the randomness in the edge-parameters using a multi-scale decomposition of the GFF. We believe that a similar strategy could lead to proofs of the existence of a phase transition for various other models.
	\end{abstract}
	
	\section{Introduction} \label{sec:intro}
	
	\paragraph{Motivation.}
	Whether a model undergoes a non-trivial phase transition or not is one of the most fundamental questions in statistical physics. In \cite{Pei36}, Peierls introduced a combinatorial technique, known as {\em Peierls argument}, to prove that the critical temperature of the Ising model is non-zero on $\mathbb Z^d$ for $d\ge2$, thus opening a new realm of questions concerning this important model of ferromagnetism. This argument found many applications to other models, including Potts models and models of random graphs such as Bernoulli percolation or the random-cluster model. 
	
	Peierls argument has two drawbacks. First, it often does not apply to continuous spin models, for instance the spin $O(n)$ models. In this case, the technique may sometimes be replaced by two other techniques: {\em Reflection Positivity} and the {\em Renormalization Group}. More precisely, Fr\"ohlich, Simon and Spencer proved that the spin $O(n)$ model undergoes a non-trivial order/disorder phase transition on $\mathbb Z^d$ with $d\ge3$ \cite{FSS76} using Reflection Positivity, and Balaban and coauthors (see \cite{Bal} and references therein) proved delicate properties of the large $\beta$ regime using the Renormalization Group. Let us mention that in the special case of the XY model (i.e.~when $n=2$), there are special proofs relying on the Coulomb gas \cite{FS82,KK86}.
	
	Another problem with Peierls argument is that it requires a precise understanding of so-called {\em cut sets}, i.e.~sets of edges which disconnect certain sets of vertices from infinity. On planar graphs, this boils down to the understanding of circuits in the dual graph. On non-planar graphs, the question is a much more complex combinatorial problem and it is not completely understood in general.
	
	In this paper, we wish to present a new technique which we believe can be useful to prove the existence of a phase transition for various models. The object of interest of this paper will be Bernoulli percolation.
	
	\paragraph{Main results.} Consider a connected graph $G=(V,E)$ with vertex-set $V$ and edge-set $E$.  An edge with endpoints $x$ and $y$ will be denoted by $xy$. For every $x\in V$, let $d(x)$ be the number of $y$ such that $xy\in E$. 
	We will assume that the graph has {\em bounded degree}, that is $\sup\{d(x):x\in V\}<+\infty$.
	
	{\em Bernoulli percolation} is a model of a random subgraph of $G$ with vertex-set $V$. The subgraph is encoded by a function $\omega$ from $E$ into $\{0,1\}$. We use the notation $\omega_{xy}$ to denote the value of $\omega$ at the edge $xy$ and think of edges $xy$ with $\omega_{xy}=1$ as being the edges of the subgraph. These edges are called {\em open}, while those with $\omega_{xy}=0$ are called {\em closed}. We are interested in the connectivity properties of the graph $\omega$. We use the notation $\lr{}{}{S}{T}$ (resp.~$\lr{}{}{S}{\infty}$) 
	to denote the event that 
	there is a nearest neighbor open path in $\omega$ connecting a vertex in $S$ to a vertex in $T$ 
	(resp. the event that $S$ intersects an infinite connected component of $\omega$). 
	Also, let $\nlr{}{}{S}{T}$ (resp. $\nlr{}{}{S}{\infty}$) denote the complement of the event $\lr{}{}{S}{T}$ (resp. $\lr{}{}{S}{\infty}$).

	The Bernoulli bond percolation of parameter ${\bf p}=(p_{xy}:xy\in E)\in[0,1]^E$ is the probability measure on configurations $\omega$ for which the variables $\omega_{xy}$ form a family of independent Bernoulli random variables of respective parameters $p_{xy}$. We denote such a measure by ${\bf P}_{\bf p}$ and, when $p_{xy}=p$ for every $xy\in E$, we simply write ${\bf P}_p$. The main question of interest is whether the critical parameter 
	$$p_c(G):=\inf\big\{p\in[0,1]:{\bf P}_p[x\longleftrightarrow\infty]>0\big\}$$(here $x\in V$ is a vertex chosen arbitrarily)
	is strictly smaller than 1 or not. Let us mention that proving $p_c(G)>0$ is a much simpler task: a simple comparison with branching process actually implies $p_c(G)\ge\frac{1}{D}$, where $D$ is the maximum degree of $G$.

	In order to state the main result, we need another notion. Given a graph $ G$, {\em simple random walk} is the discrete-time Markov chain $(X_n)_{n\geq0}$ on $V$ moving, at each time step, from its position to one of its neighbors in $G$ chosen uniformly at random. Define the {\em heat kernel} and the {\em Green function} by the following formula: for every $x,y\in V$ and $n\ge0$,
	$$p_n(x,y):={\rm P}[X_n=y \vert X_0=x]\quad\text{ and }\quad \mathsf{G}(x,y):=\frac{1}{d(y)}\sum_{n=0}^{\infty} p_n(x,y).$$
	
	The main object of this paper is the following result.
	\begin{theorem}
		\label{thm:main}
		Consider a graph $G$ with bounded degree. Assume that there exist real numbers $d>4$ and $c>0$ such that	\begin{equation}
		\label{eq:heat.kernel.bound}\tag{H$_d$}
		p_n(x,x)\leq \frac{c}{n^{d/2}}\qquad\forall x\in V\ ,\ \forall n\ge1.
		\end{equation}
		Then, there exists 
		$p<1$ such that for every finite set $S\subset V$,
		\begin{equation}
		\label{eq.main.result}
		\P_{p}[\nlr{}{}{S}{\infty}]\le \exp[-\tfrac12\mathrm{cap}(S)]		,
		\end{equation}
		where 
		$\mathrm{cap}(S):=\sum_{x\in S} d(x){\rm P}[X_k\notin S,\forall k\ge1 \vert X_0=x]$ is the capacity of $S$. In particular, since $\mathrm{cap}(S)>0$ iff $G$ is transient (which is implied by \eqref{eq:heat.kernel.bound}, $d>2$), one has $p_c( G)<1$.
	\end{theorem}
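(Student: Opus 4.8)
The estimate \eqref{eq.main.result} is the crux; once it is known, $p_c(G)<1$ follows with little effort. Indeed $(\mathrm{H}_d)$ with $d>2$ makes the simple random walk transient, and — since $(\mathrm{H}_d)$ also yields $p_n(x,y)\le Cn^{-d/2}$ together with $p_n(x,y)=0$ for $n<\mathrm{dist}(x,y)$ — it forces $\mathsf{G}(x,y)\to0$ as $y\to\infty$. On the other hand, the equilibrium measure $e_{B_n}$ of a ball $B_n$ around a fixed vertex is supported on the frontier of $B_n$, while its potential $\mathsf{G}e_{B_n}(\cdot)=\P[\text{the walk hits }B_n]$ increases pointwise to $1$; these two facts are incompatible with $\mathrm{cap}(B_n)=\|e_{B_n}\|_1$ staying bounded, so $\mathrm{cap}(B_n)\to\infty$. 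Applying \eqref{eq.main.result} to $S=B_n$ then gives $\P_p(\nlr{}{}{B_n}{\infty})\to0$, hence $\P_p(\exists\text{ an infinite component})=1$ for the $p$ provided, i.e.\ $p_c(G)<1$.

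So everything rests on \eqref{eq.main.result}. I would bring in the Gaussian free field $\varphi=(\varphi_x)_{x\in V}$ on $G$, the centred Gaussian field with covariance $\Eb[\varphi_x\varphi_y]=\mathsf{G}(x,y)$ (finite by transience). Writing $e_S$ for the equilibrium measure of $S$, one has $\mathsf{G}e_S\equiv1$ on $S$ and $\mathrm{cap}(S)=\sum_{x}e_S(x)=\langle e_S,\mathsf{G}e_S\rangle$, so the elementary Gaussian computation $\Eb[e^{\langle e_S,\varphi\rangle}]=\exp(\tfrac12\langle e_S,\mathsf{G}e_S\rangle)=\exp(\tfrac12\mathrm{cap}(S))$ shows that \eqref{eq.main.result} is \emph{equivalent} to
\[
\P_p(\nlr{}{}{S}{\infty})\cdot\Eb\big[e^{\langle e_S,\varphi\rangle}\big]\le 1 .
\]
The plan is to prove this inequality by realising the right-hand factor (or a closely related functional of $\varphi$) as a connectivity probability for Bernoulli percolation in an environment built from $\varphi$, and then estimating that quantity scale by scale.

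\emph{The GFF--percolation dictionary.} The new ingredient is an exact identity expressing a suitable functional of $\varphi$ in terms of connectivity probabilities for Bernoulli percolation whose edge-parameters $q_{xy}=q_{xy}(\varphi)\in[0,1]$ are functions of the field, with $\Eb_\varphi[q_{xy}]$ tuned so that the annealed law sits next to $\P_p$. Concretely, one integrates the disconnection event in the environment $\mathbf q(\varphi)$ against the exponential tilt $e^{\langle e_S,\varphi\rangle}$ — equivalently, one shifts the mean of the GFF by $-\mathsf{G}e_S$ (which equals $-1$ on $S$), at the Radon--Nikodym price $\exp(\tfrac12\mathrm{cap}(S))$ — the aim being to recover $\P_p(\nlr{}{}{S}{\infty})$ up to a factor $\le1$. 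Pinning down such an identity, and in particular that the Gaussian side returns the \emph{exact} capacity with no lower-order remainder, is the heart of the construction; it plays for percolation the role that the Brydges--Fr\"ohlich--Spencer / Symanzik representation plays for the random walk.

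\emph{Integrating out the environment by multi-scale decomposition.} The correlations of the annealed environment $\mathbf q(\varphi)$ are precisely the correlations of $\varphi$, which I would resolve by writing $\mathsf{G}=\sum_{j\ge0}\mathsf{G}_j$ as a sum of positive-definite kernels — for instance $\mathsf{G}_j=\int_{2^j}^{2^{j+1}}e^{-t\mathcal{L}}\,dt$ with $\mathcal{L}$ the walk's Laplacian — so that $\varphi=\sum_{j}\varphi_j$ with independent summands; by $(\mathrm{H}_d)$ each $\varphi_j$ has pointwise variance $\lesssim 2^{-j(d/2-1)}$ and decorrelates beyond distance $\asymp 2^{j/2}$. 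Revealing the scales from coarse to fine and using this near-independence, one runs a renormalisation: at scale $j$ the relevant conditional expectation essentially factorises over a grid of mesh $\asymp 2^{j/2}$, the induction hypothesis is applied on the blocks, and the error created at scale $j$ is governed by $\sum_{y}\mathsf{G}_j(x,y)^2\lesssim 2^{j(2-d/2)}$. The series $\sum_j 2^{j(2-d/2)}$ converges \emph{exactly when $d>4$} — this is the sole role of the dimension hypothesis, and it amounts to $\sum_y\mathsf{G}(x,y)^2<\infty$. Taking $p$ (hence the tilt parameter) close enough to $1$ makes the accumulated error negligible, leaving precisely $\exp[-\tfrac12\mathrm{cap}(S)]$. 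The main obstacle is exactly this interplay: one needs a representation that is a genuine \emph{identity} — mere stochastic domination is useless and points the wrong way — so that the Gaussian integration yields the sharp constant $\tfrac12\mathrm{cap}(S)$, while simultaneously the multi-scale induction must, at every scale, faithfully rewrite a GFF conditional expectation as a percolation quantity on a coarsened environment, with errors both summable (whence $d>4$) and uniformly absorbable as $p\uparrow1$.
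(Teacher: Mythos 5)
The architecture you sketch does coincide with the paper's (a GFF with covariance $\mathsf{G}$, the equilibrium measure $e_S$ producing the exact factor $\exp[\tfrac12\mathrm{cap}(S)]$, percolation in an environment built from the field, and a multi-scale decomposition used to integrate the environment out), but both load-bearing steps are left as declared black boxes, and at the one point where you commit to a mechanism it is not one that delivers the result. For the ``dictionary'' you propose to tilt by $e^{\langle e_S,\varphi\rangle}$, i.e.\ shift the mean by $-\mathsf{G}e_S$, and to look for an exact identity returning $\P_{p}(\nlr{}{}{S}{\infty})$. What is actually needed is an \emph{inequality} of the form $\Eb[\P_{\mathbf p(\psi)}(\nlr{}{}{S}{\infty})]\le\Eb\big[\exp\big(-\sum_{x\in S}e_S(x)(\psi_x+1)\big)\big]$ with $\mathbf p(\psi)_{xy}=1-\exp[-2(\psi_x+1)_+(\psi_y+1)_+]$, and its proof is not a Cameron--Martin computation: it rests on the observation that, conditionally on $|\psi+1|$, the signs of $\psi+1$ form an Ising model with couplings $|\psi_x+1|\,|\psi_y+1|$, on the Edwards--Sokal coupling, and on the fact that on the event $\{\nlr{}{}{S}{\Lambda^c}\}$ the clusters of $S$ can be globally sign-flipped, so the conditional expectation of the exponential functional is at least $1$. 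Nothing in your sketch produces this step; you yourself call it ``the heart of the construction'' and do not supply it, while the optimization $t=e_S$ that you do carry out is the easy part. Note also that the relation obtained this way compares $\P_p$ to an \emph{annealed} disconnection probability, which is the second thing that must be proved, not a consequence of the Gaussian computation.

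The second half has the same status plus concrete obstructions. The scale-by-scale step is not a factorization of conditional expectations over blocks: in the paper it is a Russo-formula differential inequality in which deleting the scale-$n$ field above a threshold $\lambda$ is compensated by raising a sprinkling parameter $q$, proved by (i) conditioning on $\phi^n_x=\lambda$ and using that the induced shift of the field is nonnegative and vanishes outside $B_{2^n}(x)$ --- which requires the decomposition kernels to be pointwise nonnegative \emph{and of exact finite range} --- and (ii) an Aizenman--Grimmett-type rewiring of pivotal sets inside $B_{2^n}(x)$, whose cost $\exp(C2^n)$ (this is also where bounded degree enters, and it never appears in your sketch) is beaten by the Gaussian density only because the scale-$n$ variance is $O(2^{-\frac{d-2}{2}n})$ with $\tfrac{d-2}{2}>1$; that, and not summability of $\sum_y\mathsf{G}(x,y)^2$, is where $d>4$ is used. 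Your proposed decomposition $\int_{2^j}^{2^{j+1}}e^{-t\mathcal L}\,dt$ has nonnegative entries but is not exactly finite range (only approximate decorrelation beyond the diffusive scale), so both the initialization by Liggett--Schonmann--Stacey domination and the conditioning argument above would break as stated; the paper instead sums the lazy discrete-time kernel over $2^{n-1}\le k<2^n$, which is exactly supported in $B_{2^n}$. In short, your proposal is a correct table of contents, but the two lemmas that constitute the proof --- the sign/Edwards--Sokal inequality and the pivotal-rewiring differential inequality --- are missing, and the quantitative mechanism you attribute to $d>4$ is not the one that makes the induction close.
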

	
	Let us mention a few applications of the theorem. We say that a graph $G$ satisfies an {\em isoperimetric inequality of dimension $d$} if there exists some constant $c>0$ such that
	\begin{equation}\label{eq:isoper}\tag{I$_d$}
	|\partial K|\ge c|K|^{\frac{d-1}{d}}~~\text{for all finite}~K\subset V,
	\end{equation}
	where $\partial K := \{\{x,y\}\in E : x\in K,~ y\notin K \}$ is the edge boundary of $K$. The {\em isoperimetric dimension} of $G$, which we denote by ${\rm Dim}(G)$, is defined as the supremum over all $d$ such that \eqref{eq:isoper} holds. In their celebrated paper \cite{BenSch96}, Benjamini and Schramm asked whether ${\rm Dim}(G)>1$ necessarily implies $p_c( G)<1$. We give the following partial answer to this question:
	
	\begin{theorem}
		\label{thm:isoper}
		If a bounded-degree graph $G$ satisfies ${\rm Dim}(G)>4$, then $p_c( G)<1$. 
	\end{theorem}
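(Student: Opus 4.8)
The plan is to deduce Theorem~\ref{thm:isoper} from Theorem~\ref{thm:main} by showing that the isoperimetric hypothesis forces the on-diagonal heat-kernel decay \eqref{eq:heat.kernel.bound}. Since ${\rm Dim}(G)>4$, the definition of the isoperimetric dimension lets us fix a real number $d$ with $4<d<{\rm Dim}(G)$ for which \eqref{eq:isoper} holds with some constant $c_0>0$. It then suffices to prove that \eqref{eq:isoper}, together with the bounded-degree assumption, implies \eqref{eq:heat.kernel.bound} for this same $d$ (with a possibly larger constant $c$): Theorem~\ref{thm:main} applied to this $d$ then yields $p_c(G)<1$.

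The implication from isoperimetry to heat-kernel decay is classical, and I would run it through the usual chain of functional inequalities. First, since $G$ has bounded degree the vertex and edge boundaries of a finite set are comparable, so \eqref{eq:isoper} is equivalent, up to constants depending only on $c_0$ and the maximal degree $D$, to the $\ell^1$-Sobolev inequality
\[
\Big(\sum_{x\in V}|f(x)|^{\frac{d}{d-1}}\Big)^{\frac{d-1}{d}}\;\le\;C_1\sum_{xy\in E}|f(x)-f(y)|
\]
for all finitely supported $f\colon V\to\mathbb{R}$ (the Federer--Fleming/Maz'ya argument, via the co-area formula). Applying this to a suitable power of $|f|$ and interpolating with H\"older's inequality (possible since $d>2$) gives the Nash inequality
\[
\Big(\sum_x f(x)^2 d(x)\Big)^{1+\frac{2}{d}}\;\le\;C_2\,\mathcal{E}(f,f)\,\Big(\sum_x |f(x)|\,d(x)\Big)^{\frac{4}{d}},
\]
where $\mathcal{E}(f,f)=\tfrac12\sum_{xy\in E}(f(x)-f(y))^2$ is the Dirichlet form of the simple random walk. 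By the Nash moment argument --- controlling the time-derivative of the $\ell^2$-norm (with weights $d(\cdot)$) of the heat semigroup applied to a point mass --- the Nash inequality implies $p_n(x,x)\le C_3\,n^{-d/2}$ for all $x\in V$ and $n\ge1$, uniformly in $x$; equivalently one may prove the standard continuous-time bound and transfer it to discrete time using reversibility and the monotonicity of $n\mapsto p_{2n}(x,x)$. In either case this is exactly \eqref{eq:heat.kernel.bound}.

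Each of these implications is entirely standard (see the monographs of Woess and of Barlow, or the classical works of Varopoulos and of Carlen--Kusuoka--Stroock), so the substantive ingredient here is Theorem~\ref{thm:main} itself. The one step that requires a little care --- and which I would flag as the only mild obstacle --- is to make the heat-kernel bound hold for every $n\ge1$ and uniformly in $x$, not just asymptotically: for bounded $n$ one absorbs everything into the constant using $p_n(x,x)\le1$, while the uniformity in $x$ is built into the functional-inequality formulation, whose constants are independent of the base point. Once \eqref{eq:heat.kernel.bound} is in force, Theorem~\ref{thm:main} applies and gives $p_c(G)<1$.
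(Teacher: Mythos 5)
Your proposal is correct and follows essentially the same route as the paper: the paper also deduces Theorem~\ref{thm:isoper} by fixing $d\in(4,{\rm Dim}(G))$ and invoking the classical fact that \eqref{eq:isoper} implies \eqref{eq:heat.kernel.bound}, simply citing Varopoulos (and Morris--Peres) for that implication rather than sketching the Sobolev/Nash chain as you do. Your expanded sketch of that classical implication, including the discrete-time and uniformity-in-$x$ caveats, is accurate and adds nothing contradictory to the paper's argument.
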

	This theorem follows directly from Theorem \ref{thm:main} and a result of Varopoulos \cite{Var85} (see also \cite{MorPer05} or \cite[Corollary~6.32]{LyoPer17} for a proof relying on evolving sets) stating that \eqref{eq:isoper} implies \eqref{eq:heat.kernel.bound}.

	An important application of Theorem~\ref{thm:isoper} is the following result answering the first two conjectures of Benjamini and Schramm from \cite{BenSch96}. The graph $G$ is called {\em quasi-transitive}
	if the action of the automorphism group ${\rm Aut}(G)$ on $V$ has only finitely many orbits. Typical examples of quasi-transitive graphs to keep in mind are Cayley graphs of finitely generated groups.
	Let ${\rm B}_r(x)$ be the ball of radius $r$ centered at $x$ with respect to the graph distance. We say that a graph $G$ has {\em super-linear growth} if $\limsup \tfrac1r|{\rm B}_r(x)|=+\infty$. 			\begin{theorem}\label{thm:main2}
		Let $ G$ be a quasi-transitive graph with super-linear growth, then $p_c( G)<1$.
	\end{theorem}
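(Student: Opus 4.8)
The plan is to derive Theorem \ref{thm:main2} from Theorem \ref{thm:isoper} by showing that a quasi-transitive graph with super-linear growth necessarily satisfies ${\rm Dim}(G) > 4$ — in fact that its isoperimetric dimension is either infinite or, failing that, still exceeds any prescribed threshold whenever the growth is super-linear. The crucial structural input is a well-known dichotomy for quasi-transitive (in particular Cayley) graphs coming from the work on growth of groups: a quasi-transitive graph either has polynomial growth of some integer degree $D$, in which case by Gromov's theorem the group is virtually nilpotent and the graph is quasi-isometric to $\mathbb{Z}^D$, or it has super-polynomial growth. Since we assume super-linear growth, the polynomial-growth case forces $D \ge 2$, and super-polynomial growth is even better. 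So the first step is to invoke this dichotomy and split into the two cases.

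In the polynomial-growth case of degree $D \ge 2$, I would use the classical fact (again going back to Gromov, Varopoulos, Coulhon–Saloff-Coste) that a Cayley graph — or quasi-transitive graph — with $|{\rm B}_r| \asymp r^D$ satisfies the isoperimetric inequality \eqref{eq:isoper} with $d = D$; this is the sharp isoperimetric profile for such graphs, and it is stable under quasi-isometry, so it transfers from $\mathbb{Z}^D$. If $D \ge 5$ we are immediately done via Theorem \ref{thm:isoper}. The remaining polynomial cases $D \in \{2,3,4\}$ need a separate and elementary argument: here the graph is quasi-isometric to $\mathbb{Z}^D$, which contains $\mathbb{Z}^2$ as a subgroup, and one can use a direct percolation comparison — one sinks a copy of a supercritical slab or a two-dimensional grid inside $G$. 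Concretely, one shows $p_c(G) \le p_c(\mathbb{Z}^D \times \{0,\dots,k\}^{?})$... but more simply: for any quasi-transitive graph with at least quadratic growth, Benjamini–Schramm-style arguments or the explicit slab results of Grimmett–Marstrand give $p_c < 1$. Since the statement only asks for $p_c(G) < 1$ and the hard analytic content is Theorem \ref{thm:main}, I would actually prefer to handle all the low-dimensional polynomial cases uniformly by the following observation: a quasi-transitive graph of polynomial growth degree $D$ admits a quasi-transitive quotient or a subgraph with better isoperimetry only when $D$ is large, so instead one reduces directly to the known result that $p_c(\mathbb{Z}^d)<1$ for all $d\ge 2$ together with the fact that $G$ contains a subgraph quasi-isometric to such a lattice with $d\ge 2$, and monotonicity of $p_c$ under adding edges.

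In the super-polynomial growth case, the volume growth is faster than any polynomial, so for every $d$ one has $|{\rm B}_r(x)| \ge r^d$ for all large $r$; combined with quasi-transitivity and the standard fact that such volume lower bounds yield isoperimetric inequalities of the corresponding dimension (via the "growth implies isoperimetry" results of Coulhon–Saloff-Coste for groups, or directly for quasi-transitive graphs), this gives ${\rm Dim}(G) = \infty > 4$, and Theorem \ref{thm:isoper} applies.

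The main obstacle I anticipate is the bookkeeping in the low polynomial-growth cases $D \in \{2,3,4\}$: Theorem \ref{thm:isoper} as stated requires ${\rm Dim}(G) > 4$ strictly, so it does \emph{not} cover quasi-transitive graphs quasi-isometric to $\mathbb{Z}^2, \mathbb{Z}^3, \mathbb{Z}^4$, and these must be treated by a genuinely different (older, easier) argument — the Grimmett–Marstrand slab theorem, or the observation that these graphs contain a quasi-isometrically embedded $\mathbb{Z}^2$ and that $p_c(\mathbb{Z}^2)<1$, pulled back through the embedding using that $p_c$ is monotone and stable under bounded-distortion quasi-isometries of bounded-degree graphs. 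Making the quasi-isometry-stability of "$p_c<1$" precise (rather than of an isoperimetric inequality) is the one place the argument needs care; everything else is a direct citation of Gromov's theorem and the growth-to-isoperimetry correspondence together with Theorem \ref{thm:isoper}.
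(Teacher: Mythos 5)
Your proposal is correct and follows essentially the same route as the paper: the paper's proof is precisely this dichotomy, handling the case $\liminf r^{-d}|{\rm B}_r(x)|<\infty$ for some $d$ via Trofimov's theorem, Gromov's theorem (a $\Z^2$ subgroup unless virtually $\Z$) and the quasi-isometry invariance of $p_c<1$, and the case $\liminf r^{-d}|{\rm B}_r(x)|=\infty$ for all $d$ via the growth-to-isoperimetry result of Lyons--Morris--Schramm (or Coulhon--Saloff-Coste for Cayley graphs), which yields \eqref{eq:isoper} for some $d>4$ and hence Theorem~\ref{thm:isoper}. Your separate treatment of polynomial growth of degree $D\ge5$ through isoperimetry and the hesitation over slab-type arguments in the low-degree cases are harmless detours; the core argument coincides with the paper's.
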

	
	The previous result can be deduced from Theorem~\ref{thm:isoper} as follows:
	\begin{itemize}
		\item If $\liminf \frac{1}{r^{d}}|{\rm B}_r(x)|<+\infty$ for some $d>0$, then a result of Trofimov \cite{Tro84} (see also \cite[Theorem 5.11]{woess2000random}) implies that the graph is quasi-isometric to a Cayley graph with polynomial growth. This fact together with super-linear growth classically implies that $p_c(G)<1$ (see the next section for details). 
		\item If $\liminf \frac{1}{r^{d}}|{\rm B}_r(x)|=+\infty$ for every $d>0$, then in particular $\liminf \frac{1}{r^{d}}|{\rm B}_r(x)|>0$ for some $d>4$. Therefore \cite[Lemma 7.2]{LyoMorSch08} (see also the proof of Corollary 7.3 of the same paper or \cite{CSC93} for the special case of Cayley graphs) implies that the graph satisfies \eqref{eq:isoper}, which in turn gives that $p_c(G)<1$ by Theorem \ref{thm:isoper}.
	\end{itemize}
	
	All the results in this paper can be extended to site percolation, finite dependent percolation and random-cluster models via classical comparisons, see  respectively \cite{GriSta98}, \cite{LigSchSta97} and \cite{Gri06}. The coupling between random-cluster models and the Ising/Potts model implies that the results translate into results on the latter as well.
	
	\paragraph{Existing results.} 
	Our results should be put in context with the previous partial results regarding the Benjamini-Schramm questions.

	As already mentioned, Peierls proved \cite{Pei36} that the Ising phase transition is non-trivial for $\Z^2$ through bounding the number of cut sets of specific size disconnecting a vertex from infinity. Peierls' proof also applies to Bernoulli percolation and can be easily extended to all $\Z^d$ ($d\geq2$) using monotonicity arguments. See also Lebowitz and Mazel \cite{lebowitz1998improved} and  Balister and Bollob\'as \cite{balister2007counting} for estimates on the number of cut sets of $\Z^d$.
	The cut set method was extended to Cayley graphs of finitely presented groups by Babson and Benjamini \cite{babson1999cut} (see also the work of Timar \cite{timar2007cutsets}). As of today, a  technique bounding the number of cut sets of a certain cardinality has not been found for general graphs.
		
	Say that a graph $G$ has {\em polynomial growth} if $\limsup \tfrac1{r^d}|{\rm B}_r(x)|<+\infty$ for some $d>0$. 		As a consequence of Gromov's celebrated theorem \cite{gromov1981groups}, every infinite finitely generated group of polynomial growth, which is not virtually $\Z$ (in the group theoretical sense), contains a subgroup isomorphic to $\Z^2$ \cite[Proposition 7.18]{LyoPer17}. Hence, there exists a Cayley graph of such groups that has a subgraph isomorphic to $\Z^2$. Since the property that $p_c(G)<1$ is stable under quasi-isometries  \cite[Theorem 7.15]{LyoPer17}, all the Cayley graphs of such groups have non-trivial phase transitions. This method was also used by Muchnik and Pak in \cite{MP01} to prove $p_c(G)<1$ for Grigorchuk groups which are a class of groups with intermediate (i.e.~faster than polynomial but slower than exponential) growth.
	
	Lyons has proved \cite{Lyons95} that every Cayley graph of {\em exponential growth} (i.e.~satisfying that $\liminf \tfrac1r\log |{\rm B}_r(x)|>0$) has a non-trivial phase transition. As noted in \cite[Page 264]{LyoPer17}, the fact that $p_c(G)<1$ for quasi-transitive graphs $G$ with exponential growth can also be easily obtained from the finiteness of the susceptibility for subcritical percolation; see \cite{AizBar87,Men86,DumTas15}.
	
	In \cite{BPP} Benjamini, Pemantle and Peres proved another criterion for 
	$p_c(G)<1$: the existence of an EIT measure for the graph. 
	A measure on self-avoiding paths starting from a fixed vertex is an EIT measure if the number of 
	intersections of two independent paths sampled according to the measure has an
	exponential tail. 
	In \cite{raoufi2017indicable}, by constructing an EIT measure, 
	it is proved that if $G$ is a Cayley graph of a virtually indicable group which is 
	not virtually $\Z$, then $p_c(G)<1$. Virtually indicable groups not only contain 
	groups of polynomial growth, they also include some groups of intermediate 
	growth.
	It is worth mentioning that the EIT method proves that for $p$ sufficiently close to $1$, there exists a transient infinite connected component almost surely. Unfortunately, it is usually difficult to construct EIT measures on general graphs.
	
	The question of $p_c(G)<1$ has also been approached by analyzing isoperimetric inequalities. Benjamini and Schramm proved in \cite{BenSch96} that if $G$ satisfies the isoperimetric inequality of ``dimension $\infty$'', i.e. if $G$ is non-amenable, then $p_c(G)<1$. It was proved in \cite{benjamini1999group} that every unimodular transitive non-amenable graph $G$ has a  threshold $\alpha<1$ such that any (not necessarily i.i.d.) automorphism invariant percolation measure on $G$ with density higher than $\alpha$ has an infinite connected component with positive probability. 
	Kozma proved in \cite{kozma2005percolation} that planar graphs of polynomial growth with no vertex accumulation points and isoperimetric dimension greater than $1$ have non-trivial phase transition. 
	
	In \cite{teixeira2016percolation}, Teixeira showed that $p_c(G)<1$ for graphs $G$ with polynomial growth and isoperimetric dimension greater than $1$ for a stronger version of the isoperimetric inequality, called \textit{local isoperimetric inequality}. Teixeira's proof relies on a clever renormalization argument using in a crucial way the (essential) uniqueness of large connected components in a box. It is important to note that this property is not satisfied under the sole assumption that ${\rm Dim}(G)>1$, as exemplified by the graph made of two copies of $\Z^d$ connected to each other by a single edge between two of their vertices. Also, in contrast to Teixeira's proof, our method {\em does not rely on uniqueness}: it works for graphs on which there may be any number of infinite connected components. The method of \cite{teixeira2016percolation} was further exploited in \cite{candellero2015percolation} to prove, without invoking Gromov's theorem, that $p_c(G) < 1$ for quasi-transitive graphs $G$ with super-linear but polynomial growth.	
	
	Let us conclude by pointing out that for a given \textit{explicit} graph $G$, it is often not hard to find some particular structure inside it that allows one to verify that $p_c(G)<1$. For instance, all known examples of Cayley graphs can be proved to have a phase transition without using the previous result. Nevertheless, groups not in the above known categories are discovered from time to time, see e.g.~\cite{Nek}, and without our result they need a case by case analysis.
	This should be compared to many group-theoretical properties of Cayley graphs that can often be proved (or disproved) for every explicit groups, yet are tremendously difficult to verify for the whole family of graphs under study. 
			
	\paragraph{Discussion on the strategy of proof.}
	The proof of Theorem \ref{thm:main} relies on a new connection between the Gaussian Free Field (GFF) and Bernoulli percolation. The connection goes through the observation that conditionally on the absolute value of the GFF at every point, the distribution of the signs of the GFF is the one of an Ising model with certain coupling constants. 
	  Once the connection between the GFF and the Ising is made, we use the Edwards-Sokal coupling to relate the Ising model to Bernoulli percolation. 
	As a result, it is possible to express the expectations  of particular observables of the GFF in terms of the probabilities of connections for a (annealed) percolation model on a random environment (i.e.~random edge-parameters). Since the expectation of these observables can be explicitly computed in terms of the Green function and are therefore easy to study, one may
	bound from below the averaged probability of connections in this percolation model.  One can also derive such results by exploiting connections from \cite{Lup16} and \cite{Szn12} between GFF on the metric graph $\tilde{G}$ and random interlacements, see remarks in the end of Section~\ref{sec:perco.on.gff} for details.
Let us mention that in recent years, GFF on the metric graph $\tilde{G}$ has efficiently explained a number of connections between GFF and percolation-type quantities, and that the previous statement is yet another illustration of the usefulness of this object.
	
	The second step in the proof of Theorem~\ref{thm:main} consists in integrating out the randomness of the environment in order to compare the probabilities of connection in the previous model to those in a Bernoulli percolation with fixed edge-parameter. Since the environment is 
	expressed in terms of the GFF, we will proceed step by step using a multi-scale decomposition of the GFF. More precisely, we will write $\varphi=\sum_n\varphi^n$ where the $\varphi^n$ are independent Gaussian fields with finite-range correlations. We will then remove the $\varphi^n$ one by one, while increasing an ``independent'' edge-parameter $q$ in order to guarantee that the probabilities of connection keep increasing. At the end of the process, the randomness due to the $\varphi^n$ (and therefore to $\varphi$) will have completely disappeared, and we will be facing a percolation model with constant edge-density.
	
	It is interesting to notice that we will not prove, in our second step, 
	that a percolation with some constant edge-parameter $p<1$ stochastically dominates the one on the random environment, because this would be simply false. Rather, we will only compare the probabilities of connections, which is a weaker statement.
	
	We believe that the present argument consisting in integrating out the long-range modes of the GFF will have further implications in the study of strongly correlated percolation models. For instance, sharpness of the phase transition of the GFF super-level set percolation is obtained in the forthcoming paper \cite{DGRS19}; the key step is a comparison between percolation probabilities for level-sets of the GFF and that of a truncated (finite-range) version of it, which is obtained by implementing a strategy similar to the one of this paper. 
	
	\paragraph{Open problems.}
	The present results raise a number of interesting problems. The first natural one is to try to relax the requirements on the heat kernel decay. More precisely, we will see that in the first step of the proof (Proposition \ref{prop:gff.percolation}) we only need the graph to be transient (which is true as soon as $d>2$), so that we can consider the GFF in infinite volume. The assumption that $d>4$ is used in the second step of the proof (Proposition \ref{prop:main}) for (we believe) purely technical reasons. This naturally raises the following problem.
	
	\begin{problem}
		Prove Theorem~\ref{thm:main} under the assumption that $d>2$.
	\end{problem}
	The main step in which we need $d>4$ is in the rewiring estimate of Step 3 in the proof of Lemma~\ref{lem:a2}, where a competition takes place between the exponential rewiring cost and a super-exponential gain coming from the assumption $d>4$. Replacing the exponential cost by a polynomial one in the rewiring estimate would enable one to prove the result for $d>2$. We believe that this problem is tractable in the case of quasi-transitive graphs. We chose not to present the proof since the result was already obtained by different means, but we wish to highlight the fact that the barrier $d=4$ is not related to intersection properties of simple random-walks.
		
	The proof uses the bounded degree assumption in one place only (in the last step of the proof of Lemma~\ref{lem:a2} again). It is therefore natural to ask the following problem.
	\begin{problem}
		Prove Theorem~\ref{thm:main} under the assumption that the graph is locally finite, meaning that $d(x)<\infty$ for every $x\in V$.
	\end{problem}
	Another natural problem is to improve \eqref{eq.main.result}. This bound is not sharp, even when applied in a simple context. Indeed, for $ G=\Z^d$ and $S$ a ball of radius $r$ one has $\mathrm{cap}(S)\asymp r^{d-2}$ (see (2.16), p. 53 of \cite{Law91}), therefore the upper bound provided by \eqref{eq.main.result} is of the form $\exp(-cr^{d-2})$ while the truth, for $p$  above $p_c(G)$, is rather $\exp(-cr^{d-1})$ (this can be easily derived from the main result of \cite{KZ90}).
	\begin{problem}
		Improve the bound \eqref{eq.main.result}.
	\end{problem}
	This problem is probably difficult with the current techniques, due to the following caveat. The percolation with random edge-densities introduced in this paper does not dominate any percolation model with a fixed positive edge-parameter. As a consequence, we believe that the probabilities of big but finite connected components do not have the same tail behavior as in standard Bernoulli percolation.
	
	The last problem is related to other models and is much more informal. In the next section, we will use that conditioned on the absolute value of the GFF, the signs of the GFF are sampled according to an Ising model. When conditioning the (Euclidean) norm of the $n$-component GFF, the normalized field is sampled according to a spin $O(n)$ model. As a consequence, the first step of our proof can be extended to this context and it is believable that the second step (comparing the model in random environment to a model with fixed coupling constants) could be adapted, even though the lack of correlation inequalities makes it a challenge. 
	\begin{problem}
		Use the techniques developed in the present paper to prove the existence of a phase transition for the spin $O(n)$ models. 
	\end{problem}
	
	\paragraph{Notation.} Set $u_+=\max\{u,0\}$ and ${\rm sgn}(u)=+1$ if $u\ge0$ and $-1$ otherwise. For a set $\L\subset V$, set  $\L^c:=V\setminus \L$. 
		
	\paragraph{Organization of the paper.} The next section presents the connection between the GFF and a percolation model with random edge-parameters. The third section implements the ``integration'' of the randomness in the edge-parameters.

	\section{GFF and Bernoulli percolation} \label{sec:perco.on.gff}

	In this section we consider $G=(V,E)$ to be any transient graph. Let $\Lambda$ be a finite subset of $V$. The {\em Gaussian Free Field} (or GFF) with $0$ boundary conditions on $\Lambda$ is defined to be the random (Gaussian) field $\varphi=(\varphi_x:x\in \Lambda)$ in $ \R^\Lambda$ with distribution  
	\begin{align*}
	d\Pb_{\Lambda}[\varphi] &:= \frac{1}{Z_{\Lambda}} \exp [-\mathcal D_{\Lambda}(\varphi)] d\varphi,\end{align*}
	where $Z_{\L}$ is a normalizing constant, $d\varphi$ stands for the Lebesgue measure on $\R^\Lambda$ and $\mathcal D_\Lambda(\varphi)$ is the {\em Dirichlet energy} given by
	$$\mathcal D_{\Lambda}(\varphi) := \tfrac{1}{2}\sum_{\substack{xy\in E\\ \{x,y\}\cap \L\ne \emptyset}} (\varphi_x-\varphi_y)^2,$$
	where $\varphi_x$ is extended to every vertex of $V$ by setting $\varphi_x=0$ for every $x\in \Lambda^c$. Under the assumption of transience of the graph $G$, which follows from \eqref{eq:heat.kernel.bound} for $d>2$, one can extend the GFF to $\Lambda=V$ by taking the weak limit $\Pb$ of the measures $\Pb_\L$ as $\L$ tends to $V$. The measure $\Pb$ is simply the centered Gaussian vector with covariance matrix given by the Green function $\mathsf{G}$, see \cite{Ber16}. Expectation with respect to $\Pb$ (resp. $\Pb_{\L}$) is denoted by $\Eb$ (resp. $\Eb_{\L}$).
	The main result of this section is the following.
	\begin{proposition}
		\label{prop:gff.percolation}
		Let $G$ be a transient graph. Then for any finite subset $S$ of $V$ one has
		\begin{equation} \label{eq:percolates}
		\Eb[\P_{{\bf p}(\varphi)}(\nlr{}{}{S}{\infty})] \leq \exp[-\tfrac12\mathrm{cap}(S)]		
		\end{equation}
		where
		${\bf p}(\varphi)_{xy}:= 1-\exp[-2(\varphi_x+1)_+\,(\varphi_y+1)_+]$ for every $xy\in E$.
	\end{proposition}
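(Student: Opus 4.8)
\emph{Reduction to a finite box.} The plan is to turn the left-hand side of \eqref{eq:percolates} into a functional of the Gaussian field alone, compute it, and compare it with the Green function. By a standard approximation (replacing $V$ by a finite box $\Lambda\supseteq S$, the infinite-volume field $\psi$ by the $0$-boundary GFF $\psi_\Lambda$ on $\Lambda$, and collapsing $\Lambda^c$ into one ghost vertex $\mathfrak g$, so that edges of $G$ from $x\in\Lambda$ to $\Lambda^c$ become edges $x\mathfrak g$ and $\{\nlr{}{}{S}{\infty}\}$ becomes $\{\nlr{}{}{S}{\mathfrak g}\}$; recall that the finite-box capacities $\mathrm{cap}_\Lambda(S):=\sum_{x\in S}d(x)\,{\rm P}_x[X_k\notin S\ \text{for}\ 1\le k<\tau_{\Lambda^c}]$ decrease to $\mathrm{cap}(S)$), it is enough to prove, uniformly in $\Lambda$,
\[
\Eb\big[\P_{{\bf p}(\psi_\Lambda)}(\nlr{}{}{S}{\mathfrak g})\big]\ \le\ \exp\!\big[-\tfrac12\mathrm{cap}_\Lambda(S)\big].
\]

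\emph{From the field to percolation.} The engine is the fact recalled in the introduction: conditionally on $(|\psi_x|)_x$, the sign field $\mathrm{sgn}(\psi)$ is an Ising model with coupling constants $|\psi_x||\psi_y|$. I would apply it to $\phi:=\psi_\Lambda+1$, the GFF with boundary value $1$ (its law is that of $\psi_\Lambda$ translated by the harmonic function $\equiv1$, so $\phi_{\mathfrak g}=1$): conditionally on $|\phi|$, $\mathrm{sgn}(\phi)$ is the Ising model on $\Lambda\cup\{\mathfrak g\}$ with $+$ boundary condition at $\mathfrak g$ and couplings $|\phi_x||\phi_y|$, whose Edwards--Sokal partner is the random-cluster measure with $q=2$ and edge-probabilities $1-e^{-2|\phi_x||\phi_y|}$. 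Now comes the matching with ${\bf p}(\psi)$: a vertex with $\phi_x<0$ carries spin $-1$, hence lies outside the (monochromatically $+$) cluster of $\mathfrak g$ and plays no role for $\{\lr{}{}{S}{\mathfrak g}\}$; on the other vertices $|\phi_x|=(\psi_x+1)_+$, and collapsing the $\Lambda^c$-edges at $x$ turns the boundary coupling into $m_x(\psi_x+1)_+$, $m_x:=\#\{y\in\Lambda^c:y\sim x\}$. Consequently, for each fixed $\psi_\Lambda$, the FK-connection event $\{\lr{}{}{S}{\mathfrak g}\}$ has exactly the probability of the corresponding connection under Bernoulli percolation $\P_{{\bf p}(\psi_\Lambda)}$; averaging the signs given $|\phi|$ restores the law of $\mathrm{sgn}(\phi)$, so that
\[
\Eb\big[\P_{{\bf p}(\psi_\Lambda)}(\nlr{}{}{S}{\mathfrak g})\big]\ =\ \Eb\big[\P^{\mathrm{FK}}_{|\psi_\Lambda+1|}(\nlr{}{}{S}{\mathfrak g})\big],
\]
and we are left with a GFF/Ising computation.

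\emph{Extracting the capacity.} For a single point this is immediate and is the prototype: by the identity $\langle\sigma_x\rangle^+=\P^{\mathrm{FK}}[\lr{}{}{x}{\mathfrak g}]$,
\[
\Eb\big[\P_{{\bf p}(\psi)}(\nlr{}{}{x}{\infty})\big]=2\,\Pb[\psi_x<-1]=\Pb[\,|\psi_x|>1\,]\ \le\ e^{-1/(2\mathsf G(x,x))}=e^{-\frac12\mathrm{cap}(\{x\})},
\]
using the Gaussian estimate $\Pb[\,|N(0,v)|>1\,]\le e^{-1/(2v)}$ and $\mathrm{cap}(\{x\})=1/\mathsf G(x,x)$. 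For a general $S$ the target becomes a statement about the joint sign statistics of $\psi_\Lambda+1$ on $S$ and its neighbourhood, in which one must see the \emph{electrical} capacity $\mathrm{cap}_\Lambda(S)=\langle e_S,\mathsf G_\Lambda e_S\rangle$, where $e_S(x)=d(x){\rm P}_x[X_k\notin S\ \text{for}\ 1\le k<\tau_{\Lambda^c}]$ is the equilibrium measure and $g_S:=\mathsf G_\Lambda e_S$ the equilibrium potential ($0\le g_S\le1$, $g_S\equiv1$ on $S$). The natural device is a Cameron--Martin tilt of the GFF by $g_S$, which is precisely the vector of squared Cameron--Martin norm $\mathrm{cap}_\Lambda(S)$: this peels off the factor $e^{-\frac12\mathrm{cap}_\Lambda(S)}$ and leaves a Gaussian expectation of the disconnection probability reweighted by $e^{\langle e_S,\psi\rangle}$. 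Opening the latter once more with Edwards--Sokal — conditionally on $|\psi|$ and the random-cluster configuration, the disconnection probability is the indicator that every infinite cluster meeting $S$ is coloured $-1$, while $e^{\langle e_S,\psi\rangle}$ factorises over clusters — one sees that the mechanism to be exploited is that large positive values of $\psi$ on $S$, which are what inflate $e^{\langle e_S,\psi\rangle}$, also force the corresponding clusters to be macroscopic through the large couplings $|\psi_x||\psi_y|$, so that the $-1$ constraint on them supplies compensating damping.

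\emph{The main obstacle.} This last step for $|S|\ge2$ is the real difficulty. One has to convert ``$S$ disconnected from infinity'' into a statement about the Gaussian vector $(\psi_x)_{x\in S}$ whose large-deviation cost is \emph{exactly} $\mathrm{cap}(S)=\langle e_S,\mathsf G e_S\rangle$ — equivalently the Dirichlet minimum $\inf\{2\mathcal D_\Lambda(h):h|_S=1,\ h|_{\Lambda^c}=0\}$ — and the pitfall is that $\mathrm{cap}$ is strictly sub-additive, so that any argument handling the points of $S$ one at a time sees only $\sum_{x\in S}\mathrm{cap}(\{x\})$, and moreover leaks a factor exponential in $|S|$ in the inclusion--exclusion relating the sign field of the GFF to the random-cluster component of $\mathfrak g$. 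Matching the Gaussian rate function with the electrical capacity with no loss of constant is the heart of the matter; everything preceding it — the GFF$\to$Ising$\to$FK reduction and the one-point calculation — is soft.
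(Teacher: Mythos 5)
Your reduction to a finite volume with a ghost vertex, the GFF$\to$Ising$\to$FK dictionary with the shift by $1$, the identification of the FK connection probabilities with $\P_{{\bf p}(\psi)}$ on the relevant edges, and the one-point computation are all sound and correspond to Steps 1--3 of the paper's Lemma~\ref{lem:representation} together with the remark after it. But for $|S|\ge 2$ you have not given a proof: you propose a Cameron--Martin tilt by the equilibrium potential $g_S$, which reduces the claim to showing that the disconnection probability reweighted by $e^{\langle e_S,\psi\rangle}$ (for the shifted field) has expectation at most $1$, and you then explicitly leave this step open, describing it as ``the heart of the matter''. Since $\Eb[e^{\langle e_S,\psi\rangle}]=e^{\frac12\mathrm{cap}(S)}$, this step requires a genuine cancellation between the weight and the event, and the heuristic you offer (large field values force connection) is not turned into an estimate. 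So the proposal has a real gap exactly at the point where the capacity must appear.

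The missing idea, which is the actual content of Lemma~\ref{lem:representation}, is to run the comparison in the opposite direction: bound the disconnection probability by $\Eb[X_S(\psi)]$ with $X_S(\psi)=\exp[-\sum_{x\in S}t_x(\psi_x+1)]$, $t_x\ge 0$ arbitrary. Conditionally on $|\psi+1|$ and on the FK configuration $\omega$, on the event $\nlr{}{}{S}{\L^c}$ every cluster meeting $S$ avoids the boundary, so its sign is a symmetric $\pm1$ variable, independently over clusters; hence $X_S(\psi)$ and $1/X_S(\psi)$ have the same conditional law and ${\rm E}[X_S(\psi)\mid\omega]\ge 1$ (since $\tfrac12(a+a^{-1})\ge1$). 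This gives ${\rm P}_{J(\psi)}[\nlr{}{}{S}{\L^c}]\le\Eb_\L[X_S(\psi)\mid|\psi+1|]$ with no loss depending on $|S|$, because all clusters are handled simultaneously by the flip symmetry rather than point by point; your worry about sub-additivity of the capacity and exponential-in-$|S|$ leakage simply does not arise. One then computes $\Eb[X_S(\psi)]=\exp(-\sum_x t_x+\tfrac12\sum_{x,y}t_xt_y\mathsf{G}(x,y))$ and chooses $t=e_S$, so that the mean term (coming precisely from the shift by $1$) contributes $-\mathrm{cap}(S)$ and the variance term $+\tfrac12\mathrm{cap}(S)$, yielding $\exp[-\tfrac12\mathrm{cap}(S)]$ exactly. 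In other words, the Gaussian computation you were trying to set up via the tilt is done on the exponential functional itself, after the percolation inequality has already been secured by the sign-flip argument; without that inequality (or a substitute for it) your argument does not close.
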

	
	Note that for $S=\{x\}$, one gets that $x$ is connected to infinity with positive annealed probability. One may wonder
	why we added 1 to the GFF: we refer to the remarks at the end of this section for a discussion of this technical trick.

	The key step in the proof of Proposition \ref{prop:gff.percolation} is the following lemma.
	
	\begin{lemma}
		\label{lem:representation}
		Let $G$ be a transient graph and fix a finite subset $S$ of $V$ and $t\in\R^{S}$. If $X_S^t(\varphi):=\exp[-\sum_{x\in S}t_x(\varphi_x+1)]$, then
		$$\Eb[\P_{{\bf p}(\varphi)}(\nlr{}{}{S}{\infty})]\le \Eb[X_S^t(\varphi)].$$
	\end{lemma}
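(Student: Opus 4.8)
The plan is to recognise the disconnection event as disconnection from a single ``ghost'' vertex and then feed it through the Gaussian-free-field/Ising/random-cluster dictionary; after a monotone-convergence reduction to finite volume the whole statement collapses onto one elementary inequality built on $\cosh\ge 1$. First I would reduce to finite volume: since $\{\nlr{}{}{S}{\Lambda^c}\}$ increases to $\{\nlr{}{}{S}{\infty}\}$ as $\Lambda\uparrow V$ and $\Pb_\Lambda\Rightarrow\Pb$ with convergence of the relevant Gaussian exponential moments (the covariances on $S$ being entries of $\mathsf G$), it suffices to prove $\Eb_\Lambda[\P_{{\bf p}(\psi)}(\nlr{}{}{S}{\Lambda^c})]\le\Eb_\Lambda[X_S(\psi)]$ for every finite $\Lambda\supseteq S$, where now $\psi$ is the GFF on $\Lambda$. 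Put $\phi:=\psi+1$; because $\psi\equiv 0$ on $\Lambda^c$ the field $\phi$ equals $1$ there, so $\phi$ is precisely the GFF on the graph $\Lambda^\ast:=\Lambda\cup\{\mathfrak g\}$ obtained by collapsing $\Lambda^c$ to one vertex $\mathfrak g$ which is then pinned to the value $1$. Under this identification $\{\nlr{}{}{S}{\Lambda^c}\}$ becomes $\{\nlr{}{}{S}{\mathfrak g}\}$, and since $\phi_{\mathfrak g}=1$ we have ${\bf p}(\psi)_{xy}=1-\exp[-2(\phi_x)_+(\phi_y)_+]$ uniformly over all (possibly parallel) edges of $\Lambda^\ast$.

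The second step is the translation. Conditionally on $(|\phi_x|)_x$, the signs $\sigma_x:=\mathrm{sgn}(\phi_x)$ form an Ising model on $\Lambda^\ast$ with ferromagnetic couplings $|\phi_x||\phi_y|$ (up to edge multiplicities) and with $\sigma_{\mathfrak g}=+1$ deterministically --- this is the observation of Lupu and Werner mentioned above, and it is exactly why $1$ was added, the frozen boundary turning into a plus spin at $\mathfrak g$. I would couple this Ising model to its random-cluster configuration $\eta$ through Edwards--Sokal. The point is that, conditionally on $\phi$, the Bernoulli configuration $\omega\sim\P_{{\bf p}(\psi)}$ and $\eta$ can be coupled so that they agree on every edge both of whose endpoints carry a $+$ sign and so that $\{\lr{}{}{S}{\mathfrak g}\}$ is the same event for $\omega$ as for $\eta$: any open path from $S$ to $\mathfrak g$ in either configuration is forced to stay in the $+$ phase (for $\omega$ because ${\bf p}(\psi)$ vanishes on every other edge, for $\eta$ because its clusters are monochromatic while $\sigma_{\mathfrak g}=+1$), and on plus--plus edges the two configurations have the same conditional law. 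Hence $\Eb_\Lambda[\P_{{\bf p}(\psi)}(\nlr{}{}{S}{\Lambda^c})]$ equals the probability of $\{\nlr{}{}{S}{\mathfrak g}\}$ under the joint law ``sample $\phi$, then sample $(\sigma,\eta)$ from Edwards--Sokal''.

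The last step is the key inequality: for arbitrary reals $(h_x)_{x\in S}$ and any ferromagnetic Ising model with a plus-pinned ghost, $\P(\nlr{}{}{S}{\mathfrak g})\le\ising{\exp(-\sum_{x\in S}h_x\sigma_x)}$. Indeed, condition the Edwards--Sokal measure on $\eta$: on $\{\nlr{}{}{S}{\mathfrak g}\}$ every $\eta$-cluster meeting $S$ is distinct from the cluster of $\mathfrak g$ and so carries an independent uniform $\pm1$ sign, whence the conditional expectation of $\exp(-\sum_{x\in S}h_x\sigma_x)$ equals $\prod_C\cosh\big(\sum_{x\in C\cap S}h_x\big)\ge 1$, the product being over clusters $C$ meeting $S$; since $\{\nlr{}{}{S}{\mathfrak g}\}$ is $\eta$-measurable this yields $\P(\nlr{}{}{S}{\mathfrak g})\le\Eb[\mathbf 1_{\{\nlr{}{}{S}{\mathfrak g}\}}\exp(-\sum h_x\sigma_x)]\le\ising{\exp(-\sum h_x\sigma_x)}$. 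Now specialise to $h_x=t_x|\phi_x|$ for each realisation of $|\phi|$ and integrate over $\phi$: since $|\phi_x|\sigma_x=\phi_x$, the right-hand side averages (using that the conditional law of the signs given $|\phi|$ is precisely this Ising model) to $\Eb[\exp(-\sum_{x\in S}t_x\phi_x)]=\Eb_\Lambda[X_S(\psi)]$, while by the previous step the left-hand side averages to $\Eb_\Lambda[\P_{{\bf p}(\psi)}(\nlr{}{}{S}{\Lambda^c})]$. This is the finite-volume inequality, and letting $\Lambda\uparrow V$ finishes the proof.

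The genuinely delicate points are bookkeeping rather than conceptual: making the Edwards--Sokal/Bernoulli comparison airtight --- in particular handling the parallel edges created when $\Lambda^c$ is collapsed to $\mathfrak g$, and verifying that the two connectivity events to $\mathfrak g$ literally coincide --- and the passage to the infinite-volume limit, where one must combine the weak convergence of the GFF with the monotone convergence of $\{\nlr{}{}{S}{\Lambda^c}\}$ to $\{\nlr{}{}{S}{\infty}\}$ and note that the unbounded functional $X_S$ causes no trouble because the exponential moments converge (the covariance $\mathsf G$ restricted to $S$ does). The $\cosh\ge 1$ step itself, which is the heart of the matter, is short precisely because it needs nothing beyond the description of the conditional signs in the Edwards--Sokal coupling, and in particular does not touch cut-sets at all.
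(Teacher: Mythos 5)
Your proof is correct and follows essentially the same route as the paper's: conditioning on $|\psi+1|$ to get an Ising model with plus boundary conditions (your ghost vertex $\mathfrak{g}$ is just the wired reformulation), the Edwards--Sokal coupling together with the observation that ${\bf p}(\psi)$ and the FK edge-parameters agree on all edges relevant for connection to the boundary, and the sign-symmetry bound, your $\cosh\ge 1$ computation being the explicit form of the paper's ``$X_S(\psi)$ and $1/X_S(\psi)$ have the same conditional law'' argument. The only point to tighten is the limit: since the event $\{\nlr{}{}{S}{\L^c}\}$ itself varies with $\L$, one should, as in the paper, first replace $\L^c$ by $T^c$ for a fixed finite $T\subset\L$ (using monotonicity of the disconnection event), pass to $\L\uparrow V$ by weak convergence of local observables (with the uniform integrability of $X_S$ you mention), and only then let $T\uparrow V$.
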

	Before proving this lemma, let us show how it implies Proposition~\ref{prop:gff.percolation}.
	\begin{proof}[Proof of Proposition~\ref{prop:gff.percolation}]
		Since $\sum_{x\in S} t_x(\varphi_x+1)$ is a Gaussian random variable with mean $\sum_{x\in S} t_x$ and variance $\sum_{x,y\in S} t_x t_y \mathsf{G}(x,y)$, we deduce that 
		\begin{align*}
		\Eb[X_S^t(\varphi)]=\exp\Big(-\sum_{x\in S} t_x +\frac{1}{2}\sum_{x,y\in S} t_x t_y \mathsf{G}(x,y)\Big).
		\end{align*}
		Now, we choose $t$ according to the {\em equilibrium measure} of $S$, namely
		$$t_x=e_{S}(x):= d(x){\rm P}[X_k\notin S ~\forall k\ge1 \vert X_0=x]$$
		(which turns out to be the optimal choice of $t$). This gives the result by observing that $\mathrm{cap}(S)=\sum_{x\in S} e_{S}(x)$ and that $\sum_{y\in S} e_{S}(y) G(x,y)=1$ for all $x\in S$ (which can be deduced in a straightforward way via a decomposition of the random walk started at $x$ in terms of its last visit to $S$). 
	\end{proof}
	Let us now turn to the proof of the lemma. 
	
	\begin{proof}[Proof of Lemma \ref{lem:representation}]The proof proceeds in three steps. The first one relates the GFF on $\L$ to an Ising model on $\L$ with $+$ boundary conditions and random coupling constants. The second one relates the Ising model to Bernoulli percolation via the Edwards-Sokal coupling. The last step consists in taking the limit as $\L$ tends to $V$.
		
		In the first two steps, we fix a finite subset $\L$ of $V$ and consider $\mathbb P_\Lambda$.
		We also define 
		\begin{align}\label{eq:def_J}
		\begin{split}
		|\varphi+1|&:=(|\varphi_x+1|)_{x\in V},\\
		\sigma(\varphi)&:=({\rm sgn}(\varphi_x+1))_{x\in V},\\
		J(\varphi)_{xy}&:=|\varphi_{x}+1|\,|\varphi_y+1|.
		\end{split}
		\end{align}\paragraph{Step 1:}{\em  Conditionally on $|\varphi+1|$, the random variable $\sigma(\varphi)$ is distributed according to the Ising model on $\L$ with $+$ boundary conditions and coupling constants $J(\varphi)$.}
		\medbreak
		Let us mention that this is an observation that was already made in the literature (see e.g.~\cite{LupWer16}). Recall that the Ising model on $\L$ with $+$ boundary conditions and coupling constants $J=(J_{xy})$ is defined on configurations $\sigma=(\sigma_x:x\in \L)$ in $\{-1,+1\}^\L$ by 
		\begin{align*}
		\mu_{\L;J}^{+}(\sigma) &:= \frac{1}{\widetilde Z_{\Lambda}} \exp [-\mathcal  H_{\Lambda,J}(\sigma)], \end{align*}
		where $\widetilde Z_\L$ is a normalizing constant and $\mathcal H_{\L,J}(\sigma)$ is the {\em Hamiltonian} given by
		\begin{align*}
		\mathcal H_{\Lambda,J}(\sigma) &:= -\sum_{\substack{xy\in E\\ \{x,y\}\cap \L\ne \emptyset}} J_{xy}\,\sigma_x\sigma_y,
		\end{align*}
		where $\sigma$ is extended to $V$ by setting $\sigma_x=+1$ for every $x\in \L^c$.
		
		Now, the fact that $\varphi_x=0$ for every  $ x$ outside $\L$ obviously implies $\sigma(\varphi)_x=+1$. In addition, we have that
		\begin{align*}
		\mathcal{D}_{\Lambda}(\varphi) &=\tfrac{1}{2} \sum_{\substack{xy\in E\\ \{x,y\}\cap \L\ne \emptyset}}(\varphi_x-\varphi_y)^2 = F(|\varphi+1|) +\mathcal H_{\L,J(\varphi)}(\sigma(\varphi))
		,\end{align*}
		where $F$ is some function on $\R^\L$.
		This implies that 
		$$d\Pb_{\Lambda}[\varphi] = G(|\varphi+1|)\,\mu_{\lambda;J(\varphi)}^{+}(\sigma(\varphi)) d\varphi$$
		for all $\varphi\in\R^{\L}$, where $G$ is some function on $\R^\L$. Since $\varphi\mapsto (|\varphi+1|,\sigma(\varphi))$ is a bijection from $(\R\setminus\{-1\})^{\L}$ (which has total Lebesgue measure) to $\big(\R_{>0}\times\{-1,+1\}\big)^{\L}$, the above equation implies Step 1 readily.
		
		\paragraph{Step 2:} {\em For $S\subset \Lambda$, one has that $\Eb_{\L}[\P_{{\bf p}(\varphi)}(\nlr{}{}{S}{ \L^c})] \leq \Eb_{\L}[X_S^t(\varphi)].$}
		\medbreak
		This step relies on the Edwards-Sokal coupling (see \cite{Gri06} for details), which we recall for completeness. Sample a configuration $\sigma$ on $\Lambda$ according to the Ising model with $+$ boundary conditions and coupling constants $J_{xy}$. Then, construct a configuration $\omega$ on the edges in $E$ intersecting $\L$ as follows: for every edge $xy$, let $\omega_{xy}$ be a Bernoulli random variable with parameter $1-\exp(-2J_{xy}\mathbf 1_{\{\sigma_x=\sigma_y\}})$. Note that $\omega_{xy}=0$ automatically if $\sigma_x\ne\sigma_y$. Below, ${\rm P}_J$ denotes the law of $(\sigma,\omega)$ and ${\rm E}_J$ the expectation with respect to ${\rm P}_J$. (We only use this notation in this step.)
		
		The percolation process $\omega$ thus obtained is called the {\em random-cluster model} with cluster-weight $q=2$ and wired boundary condition, but this will be irrelevant for us. The important feature of this coupling will be that, conditionally on $\omega$, $\sigma$ is sampled as follows: 
		\begin{itemize}[noitemsep,nolistsep]
			\item every vertex connected to $\L^c$ receives the spin $+1$;
			\item for every connected component $\mathcal C$ of $\omega$ not intersecting $\L^c$, choose a spin $\sigma_\mathcal C$ equal to $+1$ or $-1$ with probability $1/2$, independently for each connected component, and set $\sigma_x=\sigma_{\mathcal C}$ for every $x\in\mathcal C$.\end{itemize}
		Given a realization of the GFF $\varphi$, we construct $\omega$ as above for $J(\varphi)$ and $\sigma(\varphi)$ as defined in \eqref{eq:def_J} (recall from Step 1 that $\sigma(\varphi)$ is indeed an Ising model with coupling constants $J(\varphi)$). As a direct consequence of the construction, conditionally on $\omega$, the law of the $\sigma_x$ for the vertices which are not connected to $ \L^c$ is symmetric by global flip. Applying these observations, we deduce that 
		\begin{align}\label{eq:a}
		\begin{split}
		\Eb_{\L}[X_S^t(\varphi)\,|\,|\varphi+1|] &\geq {\rm E}_{J(\varphi)}[{\rm E}_{J(\varphi)}(X_S^t(\varphi)|\omega) \,\mathbf{1}_{\{\nlr{}{}{S}{\L^c}~in~\omega\}}\,|\,|\varphi+1|]\\
		&\ge{\rm P}_{J(\varphi)}[\nlr{}{}{S}{\L^c}~in~\omega].
		\end{split}
		\end{align}
		In the last inequality we used that, conditionally on $|\varphi+1|$ and the event that $S$ is not connected to $ \L^c$ in $\omega$, $\log(X_S^t(\varphi))=\sum_{x\in S} t_x |\varphi_x+1|\sigma_x$ has mean $0$, so that ${\rm E}_{J(\varphi)}(X_S^t(\varphi)|\omega)\ge1$ by Jensen's inequality.
		
		Now, conditioned on $\sigma$, the only vertices that can potentially be connected to $ \L^c$ in $\omega$ are those which are connected by a path of pluses in $\sigma$. For an edge $xy$ with at least one endpoint of this type, one has $1-\exp(-2J_{xy}(\varphi)\mathbf 1_{\{\sigma(\varphi)_x=\sigma(\varphi)_y\}})={\bf  p}(\varphi)_{xy}$.
		This observation together with the Edwards-Sokal coupling implies
		\begin{equation*}{\rm P}_{J(\varphi)}[\nlr{}{}{S}{\L^c}~in~\omega\,|\,\sigma]
		={\bf P}_{{\bf p}(\varphi)}[\nlr{}{}{S}{\L^c}].\end{equation*}
		Integrating over $\sigma$ (given $|\varphi+1|$) and using Step 1 gives
		\begin{equation}\label{eq:b}{\rm P}_{J(\varphi)}[\nlr{}{}{S}{\L^c}~in~\omega]
		=\Eb_{\L}[{\bf P}_{{\bf p}(\varphi)}(\nlr{}{}{S}{\L^c})\,|\,|\varphi+1|].\end{equation}
		Step 2 follows readily by putting \eqref{eq:b} into \eqref{eq:a} and then integrating with respect to $|\varphi+1|$.
		\paragraph{Step 3:} {\em Passing to the infinite volume.}
		\medbreak
		Step 2 implies that for every $S\subset T\subset \L$,
		\begin{equation}\label{eq:c}\Eb_{\L}[\P_{{\bf p}(\varphi)}(\nlr{}{}{S}{T^c})]\le \Eb_{\L}[X_S^t(\varphi)].\end{equation}
		Since $S$ and $T$ are finite, the random variables considered in the previous inequality are continuous local observables of $\varphi$. Letting $\L$ tend to $V$, by weak convergence we obtain
		$$\Eb[\P_{{\bf p}(\varphi)}(\nlr{}{}{S}{T^c})]\le \Eb[X_S^t(\varphi)].$$
		Letting $T$ tend to $V$ concludes the proof.\end{proof}
	\begin{remark}
		Notice that in this section, we did not fully use the assumption that \eqref{eq:heat.kernel.bound} holds for $d>4$, from Theorem~\ref{thm:main}. The only property we needed from $G$ was its transience (implied by \eqref{eq:heat.kernel.bound} for $d>2$), so that we could consider the GFF in infinite volume.
	\end{remark}
	\medbreak
	\begin{remark}
		If we were only interested in proving $\Eb[\P_{{\bf p}(\varphi)}(\lr{}{}{x}{\infty})]>0$, but not the quantitative bound \eqref{eq.main.result}, we could have proceeded as follows. The Edwards-Sokal coupling implies that for any $x$
		$$  {\rm P}_{J(\varphi)}[\lr{}{}{x}{\L^c}~in~\omega] = {\rm E}_{J(\varphi)}[ \sigma_x].$$
		Using the above, in place of \eqref{eq:a}, and subsequently applying \eqref{eq:b} and integrating with respect to $| \varphi +1|$,  we deduce that
		$$ \Eb_\Lambda \big[ {\bf P}_{{\bf p}(\varphi)}(\lr{}{}{x}{\L^c})\big] = \Eb_\Lambda \big[{\rm sgn} (\varphi_x + 1)\big]. $$
		Proceeding as in the third step, we obtain 
		$ \Eb \big[ {\bf P}_{{\bf p}(\varphi)}(\lr{}{}{x}{\infty})\big] \geq \Eb \big[{\rm sgn} (\varphi_x + 1)\big] > 0$.\end{remark}

	\begin{remark}
	In this remark, we explain an alternative approach to Proposition~\ref{prop:gff.percolation} based on recent developments in the study of GFF on the cable system. Consider the (extended) GFF $\tilde{\varphi}$ on the metric graph $\tilde{G}$ constructed by interpreting each edge of $G$ as an interval where the field takes values continuously.
		By comparing with \cite{Lup16}, one can deduce that the clusters of the annealed percolation model with random parameters ${\bf p}(\varphi)$ exactly correspond to the connected components (when restricted to the vertices of $G$) of the super level-set $\{y\in \tilde{G}:~ \tilde{\varphi}_y + 1 > 0\}$. This connection follows from the following observations: first, the field $\tilde{\varphi}$ can be constructed from $\varphi$ by simply putting independent Brownian bridges on each edge, interpolating between the values on its endpoints; second, the probability that a Brownian bridge between $a$ and $b$ stays above $-1$ is exactly $1-\exp[-2(a+1)_+(b+1)_+]$ (see \cite{Lup16} for details).
Due to a theorem of Sznitman \cite{Szn12}, the sign-clusters of $\tilde{\varphi}_y+1$ are the same as the clusters of the superposition of a Poisson loop-soup of parameter $1/2$ with an independent random interlacement of intensity $1/2 $ on the metric graph $\tilde{G}$. The interlacement can be seen as ``infinite loops'' that arise from tilting the field by $+1$.
One can then prove that the super level-set $\{y\in \tilde{G}:~ \tilde{\varphi}_y + 1 > 0\}$ stochastically dominates a random interlacement of parameter $1/2$ (see for example Theorem 3 of \cite{Lup16}). Let us mention that the argument of Lupu is based on uniqueness of the infinite sign-cluster, which is currently written only for $\mathbb  Z^d$. Even though the uniqueness may fail for the graphs that we are studying, the domination mentioned above should still be true in our context. Taking this result as granted, and observing that the probability that $S$ intersects the random interlacement is equal to $1 - \exp[-\tfrac12\mathrm{cap}(S)]$, this would provide an alternative proof of Proposition \ref{prop:gff.percolation}. 
\end{remark}
	\medbreak
	\begin{remark} \label{rem:centered.field}
		In the same spirit as in the previous remark, Bernoulli percolation with random parameters given by ${\bf q}(\varphi)_{xy}:=1-\exp[-2(\varphi_x)_+(\varphi_y)_{+}]$ corresponds to the $0$ super level-set $\{y\in \tilde{G}:~ \tilde{\varphi}_y>0\}$. Also, using the strong Markov property for $\tilde{\varphi}$, Lupu proved in \cite{Lup16} that the sign of $\tilde{\varphi}$ can be sampled by assigning independent uniform signs to each excursion of $|\tilde{\varphi}|$. As a consequence, one has
		\begin{equation}\Eb[\P_{{\bf q}(\varphi)}(\lr{}{}{x}{y})] = \frac{1}{2}\Eb[{\rm sgn(\varphi_x)}{\rm sgn(\varphi_y)}]=\frac{1}{\pi}\arcsin \Big(\frac{\mathsf{G}(x,y)}{\sqrt{\mathsf{G}(x,x)\mathsf{G}(y,y)}}\Big)\end{equation}
		for every $x,y\in V$. One can easily deduce from this identity that the (annealed) percolation on the random environment ${\bf q}(\varphi)$ has infinite {\em susceptibility}, i.e. $\sum_{y\in V}\Eb[\P_{{\bf q}(\varphi)}(\lr{}{}{x}{y})]=+\infty$ for any $x\in V$.
	\end{remark}
	\medbreak
	\begin{remark}
		The previous remark shows that the two-point connectivity probabilities of the model with edge-parameters ${\bf q}(\varphi)$ tend to zero when the distance between the points diverges. This explains why we shift the GFF by 1:  the edge-parameters ${\bf p}(\varphi)$ are such that the two-point connectivity probabilities do not tend to zero, hence suggesting the existence of an infinite connected component.
	\end{remark}
	
	\section{Integrating out the random environment}
	
	If ${\bf p}(\varphi)$ was bounded away from 1, the result would follow by comparison between different Bernoulli percolations. Yet, the GFF is unbounded, and places where the field is large are places for which ${\bf p}(\varphi)$ is very close to 1, so that the vertices in these regions are almost automatically connected. As a consequence, we will need to consider the annealed probabilities.
	
	\begin{remark}Let us mention that we were very inspired by the beautiful paper of Rodriguez and Sznitman \cite{RodSzn13} on the study of the super level-set percolation of GFF.
	\end{remark}
	
	If we could prove that the annealed percolation on the random environment ${\bf p}(\varphi)$ was stochastically dominated by a Bernoulli percolation $\P_p$ with $p<1$, then we would be done. Unfortunately, this is not true (for example, one can prove that in $\Z^d$, the probability that all the edges inside a ball are open in the former decays slower than in the latter for any $p<1$). On the other hand, we are able to compare the probabilities {\em for ``connectivity events''} such as $\{\lr{}{}{S}{\infty}\}$.
	
	\begin{proposition}
		\label{prop:main}
		There exists $p<1$ such that for every finite subset $S$ of $V$,
		$$\P_{p}[\lr{}{}{S}{\infty}]\geq \Eb[\P_{{\bf p}(\varphi)}(\lr{}{}{S}{\infty})].$$
	\end{proposition}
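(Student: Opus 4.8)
The plan is to integrate out the randomness of $\psi$ one scale at a time. Since the field is unbounded, ${\bf p}(\psi)$ is not bounded away from $1$, so there is no hope of dominating the annealed measure $\Eb[\P_{{\bf p}(\psi)}(\,\cdot\,)]$ stochastically by a subcritical Bernoulli percolation; instead I would peel off the field scale by scale, compensating for each removed scale by an independent \emph{sprinkling} of extra open edges, arranged so that the probability of \emph{every} connection event only increases, and so that once all scales are gone the environment is constant with density $p<1$. First, decompose $\psi=\sum_{n\ge1}\psi^n$ into independent centred Gaussian fields obtained by splitting $\mathsf G(x,y)=\tfrac1{d(y)}\sum_{n\ge0}p_n(x,y)$ into dyadic blocks of random-walk times, so that $\psi^n$ has finite range of dependence $L_n\asymp2^n$ and variance $\sigma_n^2:=\sup_{x}\Eb[(\psi^n_x)^2]\lesssim L_n^{-(d-2)}$, the last bound coming from \eqref{eq:heat.kernel.bound}. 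Writing $\psi^{>n}:=\sum_{k>n}\psi^k$, one then has $\sup_x\Eb[(\psi^{>n}_x)^2]\lesssim L_{n+1}^{-(d-2)}\to0$, so $\psi^{>n}\to0$ in probability.

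Next I would introduce interpolating measures. For $\omega\in\{0,1\}^E$ and $q\in[0,1]$, let $\omega\vee_q$ denote the configuration obtained from $\omega$ by independently declaring each of its closed edges open with probability $q$. Fix a sequence $q_n\in(0,1)$ with $\sum_n q_n<\infty$, whose precise choice is forced by the single-scale step below, and set $\bar q_n:=1-\prod_{k\le n}(1-q_k)$, so $\bar q_0=0$. Define
\[
\P^{(n)}:=\Eb\big[\,\P_{{\bf p}(\psi^{>n})}\big((\omega\vee_{\bar q_n})\in\,\cdot\,\big)\big],
\]
with $\psi^{>n}$, the Bernoulli percolation and the sprinkling independent; then $\P^{(0)}=\Eb[\P_{{\bf p}(\psi)}(\,\cdot\,)]$, and under $\P^{(n)}$ the edge $xy$ is open with conditional probability $1-(1-{\bf p}(\psi^{>n})_{xy})(1-\bar q_n)$. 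As $\psi^{>n}\to0$ in probability, ${\bf p}(\psi^{>n})_{xy}\to1-\e^{-2}$ (also in probability) while $\bar q_n\to1-\prod_{k\ge1}(1-q_k)$; hence for every event $A$ depending on finitely many edges, $\P^{(n)}(A)\to\P_p(A)$ with
\[
p:=1-\e^{-2}\prod_{k\ge1}(1-q_k),
\]
and $p<1$ precisely because $\sum_k q_k<\infty$.

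The heart of the argument is a single-scale step: a choice of the $q_n$ such that, for every finite $S\subset V$, every $T\subseteq V$ (including $T=\infty$) and every $n\ge1$,
\[
\P^{(n-1)}(\lr{}{}{S}{T})\le\P^{(n)}(\lr{}{}{S}{T}).
\]
Since $\psi^{>n-1}=\psi^{>n}+\psi^n$ with $\psi^{>n}\perp\psi^n$, I would condition on $\psi^{>n}$ and reduce to a comparison involving only the single finite-range field $\psi^n$: for each fixed realisation $g$ of $\psi^{>n}$ lying in an event of probability close to $1$,
\[
\Eb_{\psi^n}\!\big[\P_{{\bf p}(g+\psi^n)}\big((\omega\vee_{\bar q_{n-1}})\in\{\lr{}{}{S}{T}\}\big)\big]\le\P_{{\bf p}(g)}\big((\omega\vee_{\bar q_n})\in\{\lr{}{}{S}{T}\}\big),
\]
the atypically large realisations of $\psi^{>n}$ being absorbed into the budget of the $q_n$. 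Here $\{\lr{}{}{S}{T}\}$ is increasing, and on every edge where $\psi^n$ is $\le0$ at both endpoints one has ${\bf p}(g+\psi^n)_{xy}\le{\bf p}(g)_{xy}$ pointwise, so those edges only help the right-hand side; the real difficulty lies in the regions where $\psi^n$ is large and positive, where the left-hand environment far exceeds ${\bf p}(g)$ and the extra connectivity this gives the left-hand model must be recreated on the right out of the fresh sprinkling of intensity $q_n$ (the increment from $\bar q_{n-1}$ to $\bar q_n$). The mechanism I would use is to condition on $\psi^n$, call a box of side $\asymp L_n$ \emph{bad} if $\psi^n$ exceeds somewhere inside it a threshold of order $q_n$ — by the Gaussian tail together with $\sigma_n^2\lesssim L_n^{-(d-2)}$, bad boxes are rare — use off the bad boxes the pointwise comparison ${\bf p}(g+\psi^n)_{xy}\le1-(1-{\bf p}(g)_{xy})(1-q_n)$, and inside each bad box re-wire any connection carried by the left-hand model using the sprinkled-open edges in a neighbourhood of size $\asymp L_n$, the probabilistic cost of this re-wiring being controlled with the bounded-degree hypothesis (which bounds the number of edges in a ball of radius $\asymp L_n$).

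This single-scale estimate is where I expect the main obstacle to lie, and it is also where the hypothesis $d>4$ enters: keeping the accumulated re-wiring cost — summed over the bad boxes met along a connection and over the scales $n$ — under control forces $\sum_n q_n<\infty$ only when $d>4$, since with the present argument the cost of a re-routing detour is \emph{exponential} in $L_n$; a polynomial bound here would relax the requirement to $d>2$. Granting the single-scale estimate, the proposition follows by bookkeeping. Iterating it with $T={\rm B}_N^c$ for an exhaustion ${\rm B}_N\uparrow V$ of $V$ — here $\{\lr{}{}{S}{{\rm B}_N^c}\}$ is a connection event depending on finitely many edges — gives
\[
\Eb\big[\P_{{\bf p}(\psi)}(\lr{}{}{S}{{\rm B}_N^c})\big]=\P^{(0)}(\lr{}{}{S}{{\rm B}_N^c})\le\P^{(n)}(\lr{}{}{S}{{\rm B}_N^c})\qquad\text{for all }n,
\]
and since $\P^{(n)}(\lr{}{}{S}{{\rm B}_N^c})\to\P_p(\lr{}{}{S}{{\rm B}_N^c})$ this yields $\Eb[\P_{{\bf p}(\psi)}(\lr{}{}{S}{{\rm B}_N^c})]\le\P_p(\lr{}{}{S}{{\rm B}_N^c})\le\P_p(\lr{}{}{S}{\infty})$. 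Letting $N\to\infty$, the events $\{\lr{}{}{S}{{\rm B}_N^c}\}$ decrease to $\{\lr{}{}{S}{\infty}\}$, so by dominated convergence $\Eb[\P_{{\bf p}(\psi)}(\lr{}{}{S}{\infty})]\le\P_p(\lr{}{}{S}{\infty})$, which is Proposition~\ref{prop:main}.
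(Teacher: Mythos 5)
Your overall strategy coincides with the paper's (a finite-range multi-scale decomposition of the GFF, removal of one scale at a time compensated by extra edge density, comparison of connection probabilities only — not stochastic domination — and a finite-volume limit at the end), and your concluding bookkeeping with the exhaustion ${\rm B}_N\uparrow V$ is fine. However, the entire content of the proposition sits in the single-scale step, which you assert rather than prove, and as formulated it contains a step that fails. Because you keep the environment in the product form ${\bf p}(\psi^{>n})_{xy}=1-\exp[-2(\psi^{>n}_x+1)_+(\psi^{>n}_y+1)_+]$ at every stage of the induction, the effect of adding the scale-$n$ field on the exponent of an edge is roughly $2\psi^n_x(g_y+1)_++2\psi^n_y(g_x+1)_+$ with $g=\psi^{>n}$: it is multiplied by the \emph{unbounded} coarser field. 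Consequently the good-region comparison ${\bf p}(g+\psi^n)_{xy}\le 1-(1-{\bf p}(g)_{xy})(1-q_n)$ is false no matter how small the threshold defining bad boxes is, unless the threshold is allowed to depend on $g$ — which destroys the scale-by-scale independence and the finite-range structure on which your bad-box estimates rely. The missing idea is precisely the paper's initialization: before the induction, ${\bf p}(\psi)$ is replaced by a larger environment whose exponent is \emph{additive over scales}, using $2(1+\psi_x)_+(1+\psi_y)_+\le 4+\sum_{n}\big[(\phi^n_x)_+^2+(\phi^n_y)_+^2\big]$ with $\phi^n=\tfrac{\pi(n+1)}{\sqrt{12}}\psi^n$ (Lemma \ref{lem:a1}); after this, each scale contributes to an edge only through $(\phi^n_x)_+^2+(\phi^n_y)_+^2$, independently of all other scales, and the finitely many lowest scales are absorbed into a constant $h$ via Liggett--Schonmann--Stacey.

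Even granting that decoupling, your one-shot per-scale comparison (rewire every connection through every bad box using freshly sprinkled edges of density $q_n$) is considerably harder than sketched: a left-hand configuration may traverse many bad boxes, the detours must be produced out of a sprinkling whose density vanishes with $n$, and the multi-box combinatorics and non-injectivity of the rewiring map have to be controlled simultaneously. The paper sidesteps all of this by interpolating continuously within scale $n$ through a truncation level $\lambda$ and comparing two derivatives via Russo's formula (Lemma \ref{lem:a2}): pivotality localizes the comparison at a single vertex/ball of radius $2^n$ at a time; the conditioning on $\phi^n_x=\lambda$ is removed by a monotone shift argument that uses the pointwise positivity and finite range of $\mathsf{G}_n$; and a single rewiring of cost $q^{-2L}D^{L}$ (performed with the accumulated density $q\ge\tfrac12$, not the fresh increment) is beaten by the Gaussian density factor $\exp[-c\lambda^2 2^{n(d-2)/2}]$, which is exactly where $d>4$ enters. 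A minor but real slip: for the dyadic heat-kernel decomposition you describe, the variance of $\psi^n$ is $\lesssim 2^{-n(d-2)/2}$, not $2^{-n(d-2)}$ (the stronger exponent corresponds to Bauerschmidt's decomposition, which lacks the positivity property used above); with the exponent you state, your own accounting would only require $d>3$, so your claim that it forces $d>4$ is internally inconsistent.
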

	
	This proposition, together with Proposition~\ref{prop:gff.percolation}, implies Theorem~\ref{thm:main} readily. We therefore focus on the proof of the proposition.
	
	\begin{remark}
		It will be evident in the proof that we could also get the result of Proposition \ref{prop:main} for all events of the form $\{\lr{}{}{A}{B}\}$ where $A,B \subset V$ are finite. It is also clear that one could prove the same for ${\bf q}(\varphi)$ instead of ${\bf p}(\varphi)$, since ${\bf q}(\varphi)\leq {\bf p}(\varphi)$ (see Remark \ref{rem:centered.field}). This, together with Remark \ref{rem:centered.field}, would imply the existence of $p<1$ such that the susceptibility of Bernoulli percolation with parameter $p$ is infinite. Since for quasi-transitive graphs the susceptibility is finite in the whole subcritical phase (see \cite{AizBar87,Men86,DumTas15}), we would deduce that $p_c(G)\le p<1$. Therefore, if we only wanted to prove Theorem \ref{thm:main2}, it would be enough to consider the (perhaps more intuitive) random environment ${\bf q}(\varphi)$.
	\end{remark}
	
	The fact that the GFF has long-range dependencies is a difficulty here. In order to overcome this problem, the key tool used in the proof of Proposition \ref{prop:main} is a multi-scale decomposition of the GFF in terms of finite-range-dependent Gaussian fields. Such decompositions appear naturally in rigorous implementations of the Renormalization Group. In this context, the spin-spin correlations of a spin system (for instance the Ising model or the $\phi^4_d$ lattice models) with a certain set of parameters $\beta,\lambda,\dots$ are expressed in terms of the GFF $\varphi$, which itself is decomposed into a sum of fields with finite-range dependencies $\varphi=\sum_n\varphi^n$. Then, one {\em integrates out} the fields $\varphi^n$ one by one by changing the parameters $\beta,\lambda,\dots$ into parameters $\beta_1,\lambda_1,\dots$, then $\beta_2,\lambda_2,\dots$, etc. We will do the same in our context. The parameter that will vary in each step to compensate for the integration of the field $\varphi^n$ will be called $q_n$. A main difference with the Renormalization Group is that we will only be interested in inequalities; see \eqref{eq:induction.step} below.

	We now describe the decomposition that we are going to use in our proof. Let $q_n(x,y)$ be the heat kernel associated with the \textit{lazy} random walk in $ G$, i.e.~the Markov chain which stays put with probability $1/2$, and moves to one of the neighbors chosen uniformly at random with probability $1/2$. For any $x,y\in  V$, set $\mathsf{G}_0(x,y):=\tfrac{1}{2d(y)}q_0(x,y)$ and 
	$$\quad \mathsf{G}_n(x,y):=\frac{1}{2d(y)}\sum_{2^{n-1}\le k<2^n}q_k(x,y)$$
	for all $n\geq1$. The matrices $(\mathsf{G}_n)_n$ satisfy the following properties:
	\begin{enumerate}
		\item \label{decomp.G}
		$\mathsf{G}(x,y)=\sum_{n\geq0}\mathsf{G}_n(x,y)$ for all $x,y\in  V$,
		\item \label{cov.G}
		$\mathsf{G}_n$ is a covariance (i.e.~symmetric positive semi-definite) matrix  for every $n\geq0$,
		\item \label{posit.assoc}
		$\mathsf{G}_n(x,y)\geq0$ for any $x,y\in V$ and $n\geq0$,
		\item \label{range.G}
		$\mathsf{G}_n(x,y)=0$ for any $x,y\in  V$ with $d(x,y)\geq 2^n$,
		\item \label{bound.decomp}
		there exists $c'>0$ such that, for every $n\geq0$ and $x\in  V$, one has
		\begin{equation}
		\label{eq:bound.decomp}
		\mathsf{G}_n(x,x)\leq c'2^{-(\frac{d-2}{2})n}.
		\end{equation}
	\end{enumerate}
	Properties \ref{decomp.G}, \ref{posit.assoc} and \ref{range.G} are evident. Property \ref{cov.G} follows from the fact $q_n$ is positive semi-definite with respect to the reversible measure $\sum_{x\in V}d(x)\delta_x$ for every $n$ (this is why we take the lazy random walk instead of the simple one). Property \ref{bound.decomp} is a direct consequence of our assumption \eqref{eq:heat.kernel.bound} on the decay of the heat kernel $p_n$. Indeed, by decomposing into the total number of times the lazy random walk stays put, we deduce that $q_n(x,x)=2^{-n}\sum_{k=0}^{n} {n\choose k} p_k(x,x)$. Combining this identity with the bound \eqref{eq:heat.kernel.bound} on $p_n(x,x)$ one can easily prove that $q_n(x,x)$ also satisfies a bound like \eqref{eq:heat.kernel.bound} (with a possibly different constant $c$).
	
	It follows from Properties \ref{decomp.G} and \ref{cov.G} above that, if $\varphi^n\sim \mathcal{N}(0,\mathsf{G}_n)$ are \textit{independent} Gaussian fields, then 
	\begin{equation}
	\varphi=\sum_{n\geq0}\varphi^n
	\end{equation}
	in law (convergence of the series in $L^2$ and almost surely can be proved by the martingale convergence theorem, for example).
	Property \ref{range.G} is called the {\em finite-range} property (the value $2^n$ should be understood as the scale at which  correlations occur in $\varphi^n$). Property \ref{posit.assoc} implies that each field $\varphi^n$ is positively correlated and therefore, satisfies the FKG inequality, see \cite{Pit82}. Property \ref{bound.decomp}, which bounds the value of $\mathsf{G}_n(x,x)$, will be used to show that $\varphi^n$ is small. 
	
	\begin{remark}
		We will use the assumption $d>4$ only to guarantee that the exponent $\frac{d-2}{2}$ in the bound \eqref{eq:bound.decomp} is strictly larger than 1. Let us mention that in \cite{Bau13}, it was proved that there is a decomposition such that the bound \eqref{eq:bound.decomp} holds with exponent $d-2$ instead of $\frac{d-2}{2}$. Unfortunately, this decomposition does not seem to satisfy  Property \ref{posit.assoc}.
	\end{remark}
	
	From now on, we write $\Pb$ (resp. $\Eb$) for the probability (resp. expectation) with respect to $(\varphi^n)_{n\geq0}$, and set 
	$\varphi:=\sum\varphi^n$. By construction, $\varphi$ has the law of the GFF. For convenience (this will be clear in \eqref{eq:CS_ineq} below), we introduce the normalized Gaussian processes $\phi^n:=\tfrac{\pi (n+1)}{\sqrt{3}} \varphi^n$.
	
	For the proof, we add three copies of the edge $xy$ of $G$, that we denote $\overline{xy}$,  $\overrightharp{xy}$, $\overrightharp{yx}$ and call the new graph with all these edges $\overline G$ (it has the same set of vertices and four edges between every pair of neighbors in $G$). Despite the notation $\overrightharp{xy}$, we will regard $\overline G$ as an \textit{undirected} graph, so paths can go through $\overrightharp{xy}$ in both directions.
	We are going to make multiple uses of the following simple fact: the superposition (maximum) of two independent Bernoulli variables with parameters $1-e^{-a}$ and $1-e^{-b}$ is a Bernoulli variable with parameter $1-e^{-(a+b)}$. 
	Fix some $h\geq0$ to be determined below. For each realization of  $(\varphi^n)_{n \geq 0}$, define a Bernoulli percolation model ${\bf P}^h_{q,n,\lambda}$ on $\overline G$ with parameters \begin{align*}
	{\bf p}_{xy}&:=q,\\
	{\bf p}_{\overline{xy}}&:=1-\exp\big(-h-\sum_{k>n}(\phi^k_x)_+^2 + (\phi^k_y)_+^2\big),\\
	{\bf p}_{\overrightharp{xy}}&:=1-\exp\big(-(\phi^n_x)^2\mathbf 1_{\{ \phi^n_x\geq\lambda \} }\big).
	\end{align*}
	The edge-density of $\overline{xy}$ depends on the $\phi^k$ with $k>n$ only, those of $\overrightharp{xy}$ depend on $\phi^n$ only, and that of $xy$ is deterministic. Also, the parameter $\lambda$ enables us to interpolate between ${\bf P}^h_{q,n,0}$ and ${\bf P}^h_{q,n+1,0}$ (which corresponds to ${\bf P}_{q,n,\infty}$). 
	
	We now integrate out the randomness coming from the Gaussian processes by showing that there exists $h$ large enough and an increasing sequence $(q_n)$ such that $\lim_n q_n<1$ and
	$$\Eb[\P^h_{q_n,n,0}(\lr{}{}{S}{\infty})] \ge \Eb[\P^h_{{\bf p}(\varphi)}(\lr{}{}{S}{\infty})]$$
	for all $n$. We prove this by induction. The first lemma initiates the induction. 
	\begin{lemma}\label{lem:a1}
		For every $n_0\geq0$, there exists $h=h(n_0)>0$ such that 
		$$\Eb[\P^h_{0,n_0,0}(\lr{}{}{S}{\L^c})] \ge \Eb[\P_{{\bf p}(\varphi)}(\lr{}{}{S}{\L^c})]$$
		for every two finite subsets $S\subset \L$ of $V$.
	\end{lemma}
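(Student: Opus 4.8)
The plan is to rewrite the left-hand side as an honest Bernoulli percolation, reduce the claim to a comparison of edge-parameters, and then close the remaining (genuine) gap by integrating out the low frequencies of the field.

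Step one: in $\P_{0,n_0,0}$ we have $q=0$ and $\lambda=0$, and for each pair $x\sim y$ the four parallel edges $xy,\overline{xy},\overrightharp{xy},\overleftharp{xy}$ of $\overline G$ are sampled independently, so the event $\{\lr{}{}{S}{\infty}\}$ (which only reads connections in $G$) has, under $\P_{0,n_0,0}$, the law of Bernoulli percolation on $G$ with parameter $\hat p_{xy}=1-\exp(-h-\sum_{k\ge n_0}[(\phi^k_x)_+^2+(\phi^k_y)_+^2])$ — the $k=n_0$ term coming from $\overrightharp{xy},\overleftharp{xy}$ (with $\lambda=0$, $(\phi^{n_0}_x)^2\mathbf 1_{\{\phi^{n_0}_x\ge0\}}=(\phi^{n_0}_x)_+^2$), and the terms $k>n_0$ together with the constant $h$ from $\overline{xy}$. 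Since $\{\lr{}{}{S}{\infty}\}$ is increasing, the goal becomes $\Eb[\P_{\hat p}(\lr{}{}{S}{\infty})]\ge\Eb[\P_{{\bf p}(\psi)}(\lr{}{}{S}{\infty})]$, and the first attempt is the pointwise bound ${\bf p}(\psi)_{xy}\le\hat p_{xy}$, equivalently $2(\psi_x+1)_+(\psi_y+1)_+\le h+\sum_{k\ge n_0}[(\phi^k_x)_+^2+(\phi^k_y)_+^2]$.

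Step two is the estimate that makes the renormalization $\phi^n=\tfrac{\pi(n+1)}{\sqrt{12}}\psi^n$ the right one. Writing $\psi^{<n_0}=\sum_{k<n_0}\psi^k$, from $(a+b)_+\le a_++b_+$ we get $(\psi_x+1)_+\le(\psi^{<n_0}_x)_++1+\sum_{k\ge n_0}(\psi^k_x)_+$, and weighted Cauchy--Schwarz gives
\[
\Big(\sum_{k\ge n_0}(\psi^k_x)_+\Big)^{2}\le\Big(\sum_{k\ge n_0}\tfrac{12}{\pi^2(k+1)^2}\Big)\sum_{k\ge n_0}(\phi^k_x)_+^{2}\le 2\sum_{k\ge n_0}(\phi^k_x)_+^{2},
\]
with a prefactor that is $\le\tfrac{12}{\pi^2}\cdot\tfrac{\pi^2}{6}=2$ and tends to $0$ as $n_0\to\infty$. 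Combining this with $2ab\le a^2+b^2$ and the elementary bound $1-e^{-(s+t)}\le(1-e^{-s})+(1-e^{-t})$, one dominates ${\bf p}(\psi)$-percolation by an independent superposition $\omega^{\mathrm{hi}}\cup\omega^{\mathrm{lo}}$, where $\omega^{\mathrm{hi}}$ depends only on $(\phi^k)_{k\ge n_0}$ and, after taking $h$ large enough to swallow the additive constants produced above, has edge-parameters $\le\hat p_{xy}$; while $\omega^{\mathrm{lo}}$ depends only on the finitely many fields $\psi^0,\dots,\psi^{n_0-1}$, which do not enter $\hat p$.

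The last step --- which I expect to be the main obstacle --- is to dispose of $\omega^{\mathrm{lo}}$: its parameters are unbounded functions of $\psi^{<n_0}$, so neither the pointwise bound ${\bf p}(\psi)_{xy}\le\hat p_{xy}$ (which is in fact false) nor domination of $\omega^{\mathrm{lo}}$ by a subcritical Bernoulli percolation is available, and one must argue at the level of the connectivity probability. The plan is to condition on the high frequencies $(\phi^k)_{k\ge n_0}$ and integrate $\psi^{<n_0}$ out: each $\psi^k$ with $k<n_0$ has range of dependence $<2^k<2^{n_0}$ and, by \eqref{eq:bound.decomp}, variance $\mathsf G_k(x,x)\le c'2^{-(d-2)k/2}$, so $\var(\psi^{<n_0}_x)\le c'\sum_{k\ge0}2^{-(d-2)k/2}$ is finite uniformly in $x$ and $n_0$ (a place \eqref{eq:heat.kernel.bound} is used, and only through $d>2$). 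Using this uniform Gaussian control, together with the finite range that confines the extra connections created by $\omega^{\mathrm{lo}}$ to be short and sparse, one should be able to bound $\Eb[\P_{{\bf p}(\psi)}(\lr{}{}{S}{\infty})\mid(\phi^k)_{k\ge n_0}]$ by the connectivity probability of a percolation that depends on $(\phi^k)_{k\ge n_0}$ alone and has parameters $\le\hat p_{xy}$, the cost of removing $\omega^{\mathrm{lo}}$ being charged to the deterministic reservoir $h=h(n_0)$; averaging over $(\phi^k)_{k\ge n_0}$ then finishes the proof. This averaging is precisely the point at which one leaves pointwise stochastic domination behind, and the reason $h$ is permitted to depend on $n_0$.
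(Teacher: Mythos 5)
Your Steps 1 and 2 reproduce the paper's preliminaries: identifying $\P_{0,n_0,0}$ with a Bernoulli percolation of parameters $\hat p_{xy}=1-\exp\big(-h-\sum_{k\ge n_0}[(\phi^k_x)_+^2+(\phi^k_y)_+^2]\big)$, and using Cauchy--Schwarz with the weights $\tfrac{\pi(k+1)}{\sqrt{12}}$ to bound $2(\psi_x+1)_+(\psi_y+1)_+$ by a constant, a high-frequency term matching $\hat p$, and a low-frequency term $K_{xy}=4+\sum_{k<n_0}[(\phi^k_x)_+^2+(\phi^k_y)_+^2]$. The problem is your last step, which is where the whole content of the lemma lies: nothing is proved there (``one should be able to bound\dots''), and moreover you assert that stochastic domination of the annealed low-frequency percolation $\omega^{\mathrm{lo}}$ by a Bernoulli percolation is unavailable, so that one ``must argue at the level of the connectivity probability.'' That is exactly backwards. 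What fails is domination of the \emph{full} ${\bf p}(\psi)$-environment by a Bernoulli percolation of parameter $p<1$ (this is the long-range, unbounded field $\psi$); but $\omega^{\mathrm{lo}}$ involves only the finitely many fields $\phi^0,\dots,\phi^{n_0-1}$, each with range of dependence $<2^{n_0}$ and variance bounded uniformly in $x$. Consequently, for any $M>0$ the annealed percolation with parameters $1-e^{-K_{xy}}$ is dominated by the superposition of an independent Bernoulli percolation of parameter $1-e^{-M}$ and the field $\omega_{xy}:={\bf 1}_{\{K_{xy}>M\}}$, which is finite-range dependent with $\sup_{xy}\Pb[K_{xy}>M]\to 0$ as $M\to\infty$; the Liggett--Schonmann--Stacey theorem \cite[Theorem 1.3]{LigSchSta97} then dominates $\omega$ by a product measure of parameter $1-e^{-1}$, so $\omega^{\mathrm{lo}}$ is dominated by Bernoulli percolation of parameter $1-e^{-(M+1)}$, and this is precisely what the reservoir $h=M+1$ in $\hat p$ absorbs (note one does not need a \emph{subcritical} dominating parameter, only one strictly below $1$, since $h$ may be as large as we like).

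Without this (or an equivalent) domination statement your sketch does not close: the ``short and sparse extra connections'' surgery, uniformly over all finite $S$ and with constants depending only on $n_0$, is itself an LSS-type statement, and conditioning on $(\phi^k)_{k\ge n_0}$ does not help by itself (it changes nothing about the law of $\psi^{<n_0}$, by independence, but it also produces no bound). So the decisive idea of the paper's proof --- that the low-frequency environment, being a finite sum of finite-range fields, genuinely \emph{is} stochastically dominated by a product measure --- is missing, and is even explicitly denied, in your proposal. The place where one truly cannot use stochastic domination and must compare only connectivity probabilities is the induction step, Lemma~\ref{lem:a2}, not this lemma.
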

	\begin{proof}
		Using that $(1+a)(1+b)\le 2+a^2+b^2$, we find that
		\begin{align*}
		2(1+\varphi_x)_+\,(1+\varphi_y)_+
		&\leq 2\Big(1+\sum_{n\geq0}(\varphi^n_x)_+\Big) \Big(1+\sum_{n\geq0}(\varphi^n_y)_+\Big) \\
		&\leq 4+2\Big(\sum_{n\geq0}(\varphi^n_x)_+\Big)^2 + 2\Big(\sum_{n\geq0}(\varphi^n_y)_+\Big)^2.\end{align*}
		Using Cauchy-Schwarz twice together with the identity $\varphi^n_x=\tfrac{\sqrt{3}}{\pi (n+1)} \phi^n_x$ gives that
		\begin{align}\label{eq:CS_ineq}
		2(1+\varphi_x)_+\,(1+\varphi_y)_+&\leq 4+\sum_{n\geq0}\big[(\phi^n_x)_+^2 +(\phi^n_y)_+^2\big]
		\end{align}
		(here of course the positive part is taken before the square).
		Define $K_{xy}:=4+\sum_{k<n_0}\big[(\phi^k_x)_+^2+(\phi^k_y)_+^2\big]$ and ${\bf q}_{xy}:=1-\exp\big(-K_{xy}\big)$. It follows from the bound above that percolation with parameters ${\bf p}(\varphi)$ is stochastically dominated by the superposition of ${\bf P}^0_{0,n_0,0}$ and an independent percolation with parameter ${\bf q}$. Therefore we only need to show that there exists $h>0$ such that the annealed percolation model with (random) parameters ${\bf q}$ is stochastically dominated by a Bernoulli percolation with parameter $1-e^{-h}$. Notice that, for every $M>0$, this model is clearly dominated by the superposition of $\omega_{xy}:={\bf 1}_{\{ K_{xy}>M \}}$ and an independent Bernoulli percolation with parameter $1-e^{-M}$. As each $\phi^k$ has finite range of dependence, $\omega$ also does. Also notice that $G$ has uniformly bounded degree and $\Pb[\omega_{xy}=1]=\Pb[K_{xy}>0]$ does not depend on the edge $xy$. These observations together with the result \cite[Theorem 1.3]{LigSchSta97} implies that, provided that $M$ is chosen large enough (depending on $n_0$), $\omega$ is dominated by a Bernoulli percolation with parameter $1-e^{-1}$. Taking $h=M+1$ gives the result.
	\end{proof}
	The second lemma is used for the induction step. More precisely, it will allow us to remove continuously the field $\phi^n$ using a reasoning similar to the Aizenman-Grimmett paper \cite{AizGri} on essential enhancements. 
	\begin{lemma}\label{lem:a2}
		There exist $\alpha>0$ and $n_0\geq1$ depending only on $G$, such that for every two finite subsets $S\subset \L$ of $V$, every $h\geq0$, $n\ge n_0$, $\lambda\ge n^{-1}$ and $q\ge \tfrac12$, we have
		$$-\frac{\rm d}{{\rm d}\lambda}\Eb[\P^h_{q,n,\lambda}(\lr{}{}{S}{\L^c})]\le \exp\big(-\alpha 2^{n}\lambda^2 \big) \frac{\rm d}{{\rm d}q}\Eb[\P^h_{q,n,\lambda}(\lr{}{}{S}{\L^c})].$$
	\end{lemma}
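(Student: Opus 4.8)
The plan is to differentiate the connection probability $\Eb[\P_{q,n,\lambda}(\lr{}{}{S}{\L^c})]$ in the two parameters $\lambda$ and $q$ and compare the two derivatives via a Russo-type formula, in the spirit of the Aizenman--Grimmett essential enhancement argument \cite{AizGri}. First I would fix $n\ge n_0$ and work on $\overline G$ with the three extra edges; the crucial observation is that increasing $\lambda$ only affects the edge-parameters ${\bf p}_{\overrightharp{xy}}$ and ${\bf p}_{\overleftharp{xy}}$, and it does so by \emph{lowering} them (raising $\lambda$ kills more of the indicator $\mathbf 1_{\{\phi^n_x\ge\lambda\}}$), so $-\tfrac{\rm d}{{\rm d}\lambda}\Eb[\P_{q,n,\lambda}(\cdots)]\ge0$; similarly $\tfrac{\rm d}{{\rm d}q}\Eb[\P_{q,n,\lambda}(\cdots)]\ge0$ since raising $q$ raises the parameter of the $xy$-edges. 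Applying Russo's formula conditionally on the Gaussian fields $(\phi^k)_k$ and then taking $\Eb$, the $\lambda$-derivative becomes (up to the explicit factor $\partial_\lambda {\bf p}_{\overrightharp{xy}}$) a sum over directed edges $\overrightharp{xy}$ of $\Eb$ of the probability that $\overrightharp{xy}$ is pivotal for $\{\lr{}{}{S}{\L^c}\}$, and likewise the $q$-derivative is a sum over straight edges $xy$ of the probability that $xy$ is pivotal. The heart of the matter is therefore a \emph{rewiring / finite-energy} comparison of pivotality for the directed edges against pivotality for the straight edges, weighted by the Gaussian law of $\phi^n$.

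Next I would carry out the rewiring. Fix a directed edge $\overrightharp{xy}$; it is only relevant when $\phi^n_x\ge\lambda$, an event of Gaussian probability at most $\exp(-c\lambda^2/\mathsf{G}_n(x,x))$, and by Property~\ref{bound.decomp} we have $\mathsf{G}_n(x,x)\le c'2^{-\frac{d-2}{2}n}\le c'2^{-n}$ since $d>4$, so this probability is at most $\exp(-\alpha' 2^n\lambda^2)$ for some $\alpha'>0$ — this is exactly where the factor $\exp(-\alpha 2^n\lambda^2)$ in the statement comes from, and where both the assumption $d>4$ and $\lambda\ge n^{-1}$ (to make the bound meaningful at small $\lambda$, though actually the clean $2^n\lambda^2$ bound needs no lower bound on $\lambda$) enter. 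Then, on the event that $\overrightharp{xy}$ is pivotal, I want to produce, at bounded combinatorial cost, a configuration in which some nearby straight edge is pivotal: because $\overrightharp{xy}$ and $xy$ join the \emph{same} pair of vertices, if $\overrightharp{xy}$ is pivotal then after closing $\overrightharp{xy}$ and $\overleftharp{xy}$ (and the $\overline{xy}$ copy if needed) the straight edge $xy$ becomes pivotal, or is already open and carries the connection; rerandomizing the finitely many edges in a bounded neighbourhood — here the bounded-degree hypothesis is used — turns a lower bound on the pivotality probability for $\overrightharp{xy}$ into one for $xy$ up to a multiplicative constant, and one absorbs the modification of ${\bf p}_{\overline{xy}}$ (which also changes slightly as we move between the $\phi^k$, $k>n$, and $k\ge n$ regimes) using that all these parameters are bounded away from $0$ when $q\ge\tfrac12$. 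Combining the Russo formulas, the Gaussian tail bound on $\{\phi^n_x\ge\lambda\}$, and this local rewiring yields the desired inequality with a suitable $\alpha>0$ and $n_0$ depending only on $G$ (through $c'$, the degree bound, and the laziness constants).

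The main obstacle I expect is the rewiring step: one must pass from ``the directed edge $\overrightharp{xy}$ is pivotal'' to ``a straight edge near $x,y$ is pivotal'' without losing more than a constant factor, \emph{and} simultaneously control the change in the environment-dependent parameter ${\bf p}_{\overline{xy}}$ as one differentiates through $\lambda$ — the indicator $\mathbf 1_{\{\phi^n_x\ge\lambda\}}$ is not differentiable, so strictly speaking one works with the absolutely continuous part and the derivative in $\lambda$ is a density term $2\lambda\, f_{\phi^n_x}(\lambda)$ (the Gaussian density at $\lambda$) times the pivotality probability, which again produces the Gaussian tail factor. Making the constant $\alpha$ uniform over all $n\ge n_0$ and over all finite $S\subset\L$ is what forces the particular normalization $\phi^n=\tfrac{\pi(n+1)}{\sqrt{12}}\psi^n$ and the scale-dependent bound $2^{-\frac{d-2}{2}n}$ to conspire correctly; this bookkeeping, rather than any single hard estimate, is the delicate part.
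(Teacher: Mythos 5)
Your plan diverges from the paper's argument at the crucial point, and the divergence is a genuine gap rather than an alternative route. First, a formula-level issue: for a fixed realization of the fields the parameter ${\bf p}_{\overrightharp{xy}}$ is \emph{not} differentiable in $\lambda$, and all directed edges emanating from $x$ share the same variable $\phi^n_x$, so when $\lambda$ crosses $\phi^n_x$ their parameters drop to $0$ simultaneously. The correct $\lambda$-derivative (the paper's \eqref{eq:d2}) is therefore a sum over \emph{vertices} $x$ of the Gaussian density $\rho^n_x(\lambda)$ times the probability that the whole directed neighbourhood $N_x$ is open pivotal \emph{as a set}, conditionally on $\phi^n_x=\lambda$. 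Replacing this by single-edge pivotality of $\overrightharp{xy}$, as you propose, underestimates the derivative (a set of edges can be pivotal while no individual edge is), so your starting point is not a valid upper bound on $-\tfrac{\rm d}{{\rm d}\lambda}\Eb[\P_{q,n,\lambda}(\lr{}{}{S}{\L^c})]$.

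Second, and more importantly, your rewiring step ignores the conditioning on $\phi^n_x=\lambda$. That conditioning tilts $\phi^n$ upwards on the whole ball $B_{2^n}(x)$ (the range of $\mathsf{G}_n$), hence boosts the random edge-parameters throughout that ball; since pivotality is not a monotone event, there is no bounded-cost, bounded-neighbourhood comparison between this conditional pivotal probability and the unconditional pivotal probability of the straight edge $xy$. This is exactly why the paper proceeds in three steps: pass from open to closed pivotality of $N_x$ (cost $e^{d(x)\lambda^2}$), use positive association and the finite-range property to remove the conditioning at the price of replacing $N_x$ by the event that the entire ball $B_{2^n}(x)$ is closed pivotal, and then perform a path surgery inside that ball to return to single-edge pivotality, at cost of order $q^{-2\cdot 2^n}$ together with a $D^{2^n}$ overcounting, i.e.\ $\exp(C2^n)$. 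Consequently the factor $\exp(-\alpha 2^n\lambda^2)$ does \emph{not} come from the tail of $\{\phi^n_x\ge\lambda\}$, as you assert; it comes from the Gaussian density $\rho^n_x(\lambda)\le \exp(-c\lambda^2 2^{\beta n})$, $\beta=\tfrac{d-2}{2}$, having to beat the $\exp(C2^n)$ surgery cost, and this is precisely where both $d>4$ (so that $\beta>1$ strictly) and $\lambda\ge n^{-1}$ are indispensable. Your remarks that the ``clean $2^n\lambda^2$ bound needs no lower bound on $\lambda$'' and that $d>4$ only enters through $2^{-\frac{d-2}{2}n}\le 2^{-n}$ show that this key difficulty has been missed: if a bounded local rewiring sufficed, the lemma (and hence the theorem) would hold for all $d>2$ and all $\lambda\ge0$, which is exactly what the authors single out as open (Question 1.1, and their comment that the lossy step is the rewiring estimate of Step 3 of this very proof).
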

	
	Before proving this lemma, let us show how Proposition~\ref{prop:main} follows from it.
	\begin{proof}[Proof of Proposition~\ref{prop:main}]
		Take $n_0$  and $h=h(n_0)> 0$ given by Lemmas \ref{lem:a2} and \ref{lem:a1} respectively. Define $q_n$ inductively by setting $q_{n}:=1/2$ for all $ n\le n_0$ and $q_{n+1}:=\lim_{\lambda\rightarrow\infty}q_n(\lambda)$ for all $n\ge n_0$, where for $\lambda\ge n^{-1}$,
		\begin{equation*}
		q_n(\lambda):= 
		q_n + 2n^{-2}+\int_{n^{-1}}^{\lambda}\exp(-\alpha 2^{n}t^2) dt .
		\end{equation*}
		First, notice that by possibly increasing $n_0$, we can guarantee that
		$$q:=\lim_{n\rightarrow\infty} q_n=\frac{1}{2}+\sum_{n\ge n_0}  \Big(2n^{-2}+\int_{n^{-1}}^{\infty}\exp(-\alpha 2^{n} t^2)dt\Big) < 1,$$ 
		so that every quantity written below makes sense. 
		
		Fix two finite subsets $S\subset\L$ of $V$. On the one hand, using that $$(\phi^n_x)^2\mathbf 1_{\{ \phi^n_x\geq0 \}}\leq n^{-2} + (\phi^n_x)^2\mathbf 1_{\{ \phi^n_x\geq n^{-1} \}},$$
		we obtain that 
		$$\Eb[\P^h_{q_n(1/n),n,1/n}(\lr{}{}{S}{\L^c})] \ge \Eb[\P^h_{q_n,n,0}(\lr{}{}{S}{\L^c})]$$
		(the $n^{-2}$ terms in ${\bf p}_{\overrightharp{xy}}$ and ${\bf p}_{\overrightharp{yx}}$ are ``transferred'' to ${\bf p}_{xy}$ by changing $q_n$ to $q_n(1/n)$; notice that this inequality is actually true even in the quenched sense).
		On the other hand, Lemma \ref{lem:a2} with the choice of $q_n(\lambda)$ tells us that the derivative of the function $$\lambda\mapsto \Eb[\P^h_{q_n(\lambda),n,\lambda}(\lr{}{}{S}{\L^c})]$$ is positive on $[n^{-1},\infty)$, so that the function is increasing in this interval. Taking $\lambda$ to infinity implies that 
		\begin{equation} \label{eq:induction.step}
		\Eb[\P^h_{q_{n+1},n+1,0}(\lr{}{}{S}{\L^c})]=\Eb[\P^h_{q_{n+1},n,\infty}(\lr{}{}{S}{\L^c})]\ge \Eb[\P^h_{q_{n},n,0}(\lr{}{}{S}{\L^c})]
		\end{equation} 
		for all $n\ge n_0$. This, together with Lemma \ref{lem:a1}, gives 
		$$\P_p[\lr{}{}{S}{\L^c}]=
		\lim_{n\rightarrow\infty}\Eb[\P^h_{q_{n},n,0}(\lr{}{}{S}{\L^c})]\ge \Eb[\P^h_{{\bf p}(\varphi)}(\lr{}{}{S}{\L^c})]$$
		where $p:=1-(1-q)e^{-h}$. The result follows by letting $\L$ tend to $V$.
	\end{proof}
	
	We now go back to the proof of Lemma~\ref{lem:a2}.
	Let us first recall classical expressions for derivatives of events in Bernoulli percolation.
	Consider an increasing event $A$ depending on finitely many edges. A {\em set} $F$ of edges in $\overline G$ is {\em pivotal} (in $\omega$) for $A$ if the configuration is in $A$ when one opens all the edges in $F$ and not in $A$ when one closes these edges. We say that $F$ is {\em open} (resp.~{\em closed}) {\em pivotal} if in addition $\omega\in A$ (resp.~$\omega\notin A$). Notice that $F$ being open pivotal {\em does not} necessarily imply that {\em all} the edges in $F$ are open. Of course, all these definitions apply when $F$ consists of a single edge  to recover the standard notion of pivotality.
	Russo's formula states that 
	\begin{equation}\label{eq:d1}
	\frac{\rm d}{{\rm d}q}\Eb[\P^h_{q,n,\lambda}(A)]=\sum_{xy\in E}\Eb[\P^h_{q,n,\lambda}(xy\text{ pivotal for }A)].
	\end{equation}
	For the derivative in $\lambda$, a quick analysis of $\tfrac1\delta\Eb[\P^h_{q,n,\lambda+\delta}(A)-\P^h_{q,n,\lambda}(A)]$ gives that 
	\begin{equation}\label{eq:d2}
	-\frac{\rm d}{{\rm d}\lambda}\Eb[\P^h_{q,n,\lambda}(A)]=\sum_{x} \rho^n_x(\lambda) \Eb[\P^h_{q,n,\lambda}(N_x\text{ open pivotal for }A) | \phi^n_x=\lambda],
	\end{equation}
	where $\rho^n_x(\cdot)$ is the density of $\phi^n_x$ and $N_x:=\{\overrightharp{xy}:~ xy\in E\}$ is the directed edge neighborhood of $x$.
	
	\begin{proof}[Proof of Lemma~\ref{lem:a2}]
		Fix any $h\geq 0$ and $n\geq 1$. To lighten the notation, write $L=2^n$ and $\P_{q,\lambda}$ instead of $\P^h_{q,n,\lambda}$ and keep in mind that $\P_{q,\lambda}$ is a function of $(\phi^k)_{k\ge n}$.  Below, we apply the notions defined in the last paragraphs for $A$ being equal to $\{\lr{}{}{S}{\L^c}\}$, where $\L$ is a finite set of vertices containing $S$. In order to lighten the notation, we write ``pivotal'' instead of ``pivotal for $\{\lr{}{}{S}{\L^c}\}$''.

		The proof proceeds as follows. We start from the quantity obtained on the right of \eqref{eq:d2}, and try to compare it to the one obtained in \eqref{eq:d1}. We do it in three steps. The first one consists in going from open to closed pivotal. The second one enables us to get rid of the conditioning on $\phi^n_x=\lambda$, at the cost of comparing to the probability that the ball $B_L(x)$ of radius $L$ around $x$ is pivotal. The third step brings us back from the probability of the latter to probabilities of being pivotal for individual edges.
		
		\paragraph{Step 1.} {\em From $x$ open pivotal to $x$ closed pivotal.}
		\medbreak
		Since $N_x$ being pivotal is independent of the state at $N_x$, we deduce that 
		\begin{align}\nonumber
		\Eb[\P_{q,\lambda}(N_x\text{ closed pivotal}) | \phi^n_x=\lambda]&\ge \Eb\big[\P_{q,\lambda}(N_x\text{ closed})\cdot\P_{q,\lambda}(N_x\text{ pivotal}) \big| \phi^n_x=\lambda\big]\\
		& = \exp\big(-d(x)\lambda^2\big)\,\Eb[\P_{q,\lambda}(N_x\text{ pivotal}) | \phi^n_x=\lambda]\nonumber\\
		& \ge \exp\big(-d(x)\lambda^2\big)\,\Eb[\P_{q,\lambda}(N_x\text{ open pivotal}) | \phi^n_x=\lambda].\label{eq:c2}
		\end{align}
		\paragraph{Step 2.} {\em Removing the conditioning on $\phi^n_x=\lambda$.}
		\medbreak
		For $N_x$ to be closed pivotal, the ball $B_L(x)$ must be closed pivotal. We therefore find that 
		$$\Eb[\P_{q,\lambda}(N_x\text{ closed pivotal})|\phi^n_x=\lambda]\le \Eb[\P_{q,\lambda}(B_L(x)\text{ closed pivotal})|\phi^n_x=\lambda].$$
		Call $\tilde{\mathsf{G}}_n$ the covariance matrix of $\phi^n$. Conditionally on $\phi^n_x=\lambda$, $\phi^n$ is a Gaussian process with mean and covariance given, respectively, by 
		$$m_z:=\lambda\frac{\tilde{\mathsf{G}}_n(x,z)}{\tilde{\mathsf{G}}_n(x,x)}\quad\text{and}\quad \mathsf{G}'_n(z,w):=\tilde{\mathsf{G}}_n(z,w)-\frac{\tilde{\mathsf{G}}_n(z,x)\tilde{\mathsf{G}}_n(x,w)}{\tilde{\mathsf{G}}_n(x,x)}$$
		for every $z,w\in V$.  In particular, for every $\mu\le\lambda$, $\phi^n$ conditioned on $\phi^n_x=\lambda$ and $\phi^n$ conditioned on $\phi^n_x=\mu$ are shifts of the same centered Gaussian process. Since the shift $(\lambda-\mu)\tilde{\mathsf{G}}_n(x,z)/\tilde{\mathsf{G}}_n(x,x)$ is non-negative for $z\in B_L(x)$ and equal to 0 for $z\notin B_L(x)$ (by Properties \ref{posit.assoc} and \ref{range.G} of $(\mathsf{G}_n)$, respectively), we deduce that 
		$$\Eb[\P_{q,\lambda}(B_L(x)\text{ closed pivotal})|\phi^n_x=\lambda]\le \Eb[\P_{q,\lambda}(B_L(x)\text{ closed pivotal})|\phi^n_x=\mu].$$
		Multiplying by $\rho_x^n(\mu)$ and integrating on $\mu\le \lambda$ gives that \begin{equation*}\Eb[\P_{q,\lambda}(B_L(x)\text{ closed pivotal})|\phi^n_x=\lambda]\le \frac{\Eb[\P_{q,\lambda}(B_L(x)\text{ closed pivotal})]}{\Pb[\phi^n_x\le \lambda]}.\end{equation*}
		Using $\Pb[\phi^n_x\le \lambda]\ge \tfrac12$ and \eqref{eq:c2} gives that 
		\begin{equation}\label{eq:b1}\Eb[\P_{q,\lambda}(N_x\text{ open pivotal}) | \phi^n_x=\lambda]\le 2 \exp(d(x)\lambda^2) \ \Eb[\P_{q,\lambda}(B_L(x)\text{ closed pivotal})].\end{equation}
		\paragraph{Step 3.} 
		{\em From a pivotal ball to a pivotal edge.
		} 
		\medbreak
		Fix an order for vertices and edges of $G$ and consider a configuration $\omega$ in $\Lambda$ for which $B_L(x)$ is closed pivotal. Let $y$ and $z$ be the smallest vertices in $B_L(x)$ such that $\lr{}{}{S}{y}$ and $\lr{}{}{z}{\L^c}$ both without using any edge contained in $B_L(x)$. Take $\gamma$ in $G$ to be the earliest (in lexicographical order) path contained in $B_L(x)$ of length at most $2L$ between $y$ and $z$, and define a configuration $\omega'$ by opening the edges of $\gamma$ one by one (in order) until the first time that an edge $uv$ of $B_L(x)$ becomes pivotal. Recall that every edge of $G$ (in particular the ones in $\gamma$, which we opened) have parameter $q$ under $\P_{q,\lambda}$.
		By construction, $\omega'$ contains a pivotal edge in $B_L(x)$, and it is elementary to check that 
		$$\Eb[\P_{q,\lambda}(\omega')]\ge q^{2L} \,\Eb[\P_{q,\lambda}(\omega)].$$ Furthermore, the map from $\omega$ to $\omega'$ is at most $2^{2L}$-to-one (since the configuration is not altered outside $B_L(x)$, the sites $y$ and $z$ can be reconstructed, and so can $\gamma$). We deduce that 
		\begin{equation}\label{eq:b5}\Eb[\P_{q,\lambda}(B_L(x)\text{ closed pivotal})]\le 2^{2L} q^{-2L}\sum_{\substack{uv\in E:\\u,v\in B_L(x)}}\Eb[\P_{q,\lambda}(uv\text{ pivotal})].\end{equation}
		\paragraph{Conclusion of the proof.} Let $D:=\max_x d(x)$ be the maximum degree of $G$. Combining the two inequalities \eqref{eq:b1} and \eqref{eq:b5} enables us to compare the right-hand sides of \eqref{eq:d2} and \eqref{eq:d1}: 
		\begin{align*}
		-\frac{\rm d}{{\rm d}\lambda}\Eb[\P_{q,\lambda}(\lr{}{}{S}{\L^c})]
		&\le \big(\sup_{x\in V} \rho^n_x(\lambda)\big) \cdot \Big(\frac{4D}{q^2}\Big)^L\exp(D\lambda^2) \sum_{uv\in E} \Eb[\P_{q,\lambda}(uv\text{ pivotal})]\\
		&\le  \big(\sup_{x\in V} \rho^n_x(\lambda)\big) \cdot \exp(C 2^n + D\lambda^2) \frac{\rm d}{{\rm d}q}\Eb[\P_{q,\lambda}(\lr{}{}{S}{\L^c})]
		\end{align*}
		for some constant $C>0$. We replaced $L$ by $2^n$ and used that $q\ge 1/2$ and that the number of times that an edge $uv$ appears in the summation is $|\{x\in V: u,v\in B_L(x)\}|\leq D^{L}$. Recalling that $\rho^n_x(\lambda):=\tfrac{1}{\sqrt{2\pi \tilde{\mathsf{G}}_n(x,x)}}\exp[-\tfrac12\lambda^2/\tilde{\mathsf{G}}_n(x,x)]$, where $\tilde{\mathsf{G}}_n$ is the covariance matrix of $\phi^n$, and using that $$\tilde{\mathsf{G}}_n(x,x)=\frac{\pi^2 (n+1)^2}{3}\mathsf{G}_n(x,x)\stackrel{\eqref{eq:bound.decomp}}\le c'' 2^{-\beta n}$$ for some $c''>0$ and $\beta>1$ (here is the only place where we use $d>4$), one can find $n_0\geq0$ and $\alpha>0$ such that 
		$$\big(\sup_{x\in V} \rho^n_x(\lambda)\big) \cdot \exp\big(C 2^n + D\lambda^2\big)\le \exp[-\alpha 2^{\beta n}\lambda^2]$$
		for every $n\geq n_0$, $\lambda\ge n^{-1}$ and $q\ge1/2$, thus concluding the proof.
	\end{proof}
	
	\paragraph{Acknowledgments} This project was initiated during a visit to IHES. The authors thank the institution for making this collaboration possible. This research was supported by the ERC CriBLaM, an IDEX grant from Paris-Saclay, and the NCCR SwissMAP. AY is partially supported by the Israel Science Foundation (grant no. 1346/15). Severo's work was also 
	supported by Fonds National Suisse. We thank Roland Bauerschmidt, Itai Benjamini, Tom Hutchcroft, S\'ebastien Martineau and Alain-Sol Sznitman for comments and references. We would also like to thank the anonymous referees for their valuable comments and corrections. 
	
	\providecommand{\bysame}{\leavevmode\hbox to3em{\hrulefill}\thinspace}
	\providecommand{\MR}{\relax\ifhmode\unskip\space\fi MR }
	\providecommand{\MRhref}[2]{%
		\href{http://www.ams.org/mathscinet-getitem?mr=#1}{#2}
	}
	\providecommand{\href}[2]{#2}


\begin{thebibliography}{BLPS99}
		
		
		\bibitem[AB87]{AizBar87}
		M.~Aizenman and D.~Barsky, \emph{Sharpness of the phase transition in
			percolation models}, Communications in Mathematical Physics \textbf{108} (1987), no.~3, 489--526.
		
		\bibitem[AG91]{AizGri}
		M.~Aizenman and G.~Grimmett, \emph{Strict monotonicity for critical points in percolation and ferromagnetic models},
		Journal of Statistical Physics {\bf 63} (1991), 817--835.
		
		\bibitem[BB99]{babson1999cut}
		E.~Babson and I.~Benjamini, \emph{Cut sets and normed cohomology with
			applications to percolation}, Proceedings of the American Mathematical
		Society \textbf{127} (1999), no.~2, 589--597.
		
		\bibitem[Bal99]{Bal}
		T. Balaban and M.~O'Carroll, {\em Low
			temperature properties for correlation functions in classical $n$-vector
			spin models,}
		Communications in Mathematical Physics  \textbf{199}  (1999),  no.~3, 493--520.
		
		\bibitem[BB07]{balister2007counting}
		P.~Balister and B.~Bollob{\'a}s, \emph{Counting regions with bounded surface
			area}, Communications in Mathematical Physics \textbf{273} (2007), no.~2,
		305--315.
		
		\bibitem[Bau13]{Bau13}
		R.~Bauerschmidt, \emph{A simple method for finite range decomposition of
			quadratic forms and {G}aussian fields}, Probability Theory and Related Fields
		\textbf{157} (2013), no.~3, 817--845.
		
		\bibitem[BLPS99]{benjamini1999group}
		I.~Benjamini, R.~Lyons, Y.~Peres, and O.~Schramm, \emph{Group-invariant
			percolation on graphs}, Geometric \& Functional Analysis GAFA \textbf{9}
		(1999), no.~1, 29--66.
		
		\bibitem[BPP98]{BPP}
		I.~Benjamini, R.~Pemantle, and Y.~Peres, \emph{Unpredictable paths and
			percolation}, The Annals of Probability \textbf{26} (1998), no.~3,
		1198--1211.
		
		\bibitem[BS96]{BenSch96}
		I.~Benjamini and O.~Schramm, \emph{Percolation beyond {$\bf Z^d$}, many
			questions and a few answers}, Electronic Communications in Probability \textbf{1} (1996), no.~8, 71--82 (electronic). 
		
		\bibitem[Ber16]{Ber16}
		N.~Berestycki, \emph{Introduction to the Gaussian free field and Liouville quantum gravity}, Available
		at http://www.statslab.cam.ac.uk/~beresty/Articles/oxford (2016)
		
		\bibitem[CT15]{candellero2015percolation}
		E.~Candellero and A.~Teixeira, \emph{Percolation and isoperimetry on roughly
			transitive graphs}, arXiv:1507.07765 (2015).
		
		\bibitem[CSC93]{CSC93}
		T.~Coulhon and L.~Saloff-Coste, \emph{Isop\'erim\'etrie pour les groupes et les vari\'et\'es}, 
		Revista Matem\'atica Iberoamericana \textbf{9} (1993), 293--314.
		
		\bibitem[DGRS19]{DGRS19}
		H.~{Duminil-Copin}, S.~Goswami, P.-F.~Rodriguez, and F.~Severo \emph{Equality of critical parameters for percolation of Gaussian free field level-sets}, arXiv:2002.07735 (2019).
		
		\bibitem[DT16]{DumTas15}
		H.~{Duminil-Copin} and V.~Tassion, \emph{A new proof of the sharpness of the
			phase transition for {B}ernoulli percolation and the {I}sing model},
		Communications in {M}athematical {P}hysics \textbf{343} (2016), no.~2,
		725--745.
		
		\bibitem[FSS76]{FSS76}
		J.~Fr\"ohlich, B.~Simon, and T.~Spencer,
		\newblock {\em Infrared bounds, phase transitions and continuous symmetry breaking},
		\newblock Communications in {M}athematical {P}hysics, \textbf{50} (1976), no.~1, 79--95.
		
		\bibitem[FS82]{FS82}
		J.~Fr\"ohlich and T.~Spencer,
		\newblock {\em Massless phases and symmetry restoration in abelian gauge
			theories and spin systems},
		\newblock Communications in {M}athematical {P}hysics,
		{\bf 83} (1982), no.~3, 411-454.
		
		\bibitem[Gri06]{Gri06}
		G.~R.~Grimmett, \emph{The random-cluster model}, Grundlehren der Mathematischen
		Wissenschaften [Fundamental Principles of Mathematical Sciences], vol. 333,
		Springer-Verlag, Berlin, 2006. 
		
		\bibitem[Gro81]{gromov1981groups}
		M.~Gromov, \emph{Groups of polynomial growth and expanding maps}, Publications
		Math{\'e}matiques de l'Institut des Hautes {\'E}tudes Scientifiques
		\textbf{53} (1981), no.~1, 53--78.
		
		\bibitem[GS98]{GriSta98}
		G.~R.~Grimmett and A.~M.~Stacey, \emph{Critical probabilities for site and bond
			percolation models}, The Annals of Probability \textbf{26} (1998), no.~4, 1788--1812.
		
		\bibitem[KK86]{KK86}
		T.~Kennedy and C.~King, 
		{\em Spontaneous symmetry breakdown in the abelian {H}iggs model},
		Communications in {M}athematical {P}hysics, {\bf 104} (1986), no.~2, 327--347.
		
		\bibitem[KZ90]{KZ90}
		H.~Kesten and Y.~Zhang, \emph{The probability of a large finite cluster in supercritical {B}ernoulli percolation}, The Annals of Probability \textbf{18} (1990), no.~2, 537--555.
		
		\bibitem[Koz05]{kozma2005percolation}
		G.~Kozma, \emph{Percolation, perimetry, planarity}, Revista Matem\'atica
		Iberoamericana \textbf{23} (2005) no.~2, 671--676.
		
		\bibitem[Law91]{Law91}
		G.~F.~Lawler, \emph{Intersections of random walks}, Birkh{\"a}user Boston Inc., 1991.
		
		\bibitem[LM98]{lebowitz1998improved}
		J.~Lebowitz and A.~Mazel, \emph{Improved {P}eierls argument for
			high-dimensional {I}sing models}, Journal of statistical physics \textbf{90}
		(1998), no.~3--4, 1051--1059.
		
		\bibitem[LMS08]{LyoMorSch08}
		R.~Lyons, B.~Morris, and O.~Schramm, \emph{Ends in uniform
			spanning forests},Electronic Journal of Probability \textbf{13} (2008), 1702--1725.
		
		\bibitem[LP17]{LyoPer17}
		R.~Lyons and Y.~Peres, \emph{Probability on trees and networks},
		Cambridge Series in Statistical and Probabilistic Mathematics, Cambridge
		University Press, 2017.
		
		\bibitem[LSS97]{LigSchSta97}
		T.~M.~Liggett, R.~H.~Schonmann, and A.~M.~Stacey, \emph{Domination by product
			measures}, The Annals of Probability \textbf{25} (1997), no.~1, 71--95.
		
		\bibitem[Lup16]{Lup16}
		T.~Lupu, \emph{From loop clusters and random interlacements to the free
			field}, The Annals of Probability \textbf{44} (2016), no.~3, 2117--2146.
		
		\bibitem[LW16]{LupWer16}
		T.~Lupu and W.~Werner, \emph{A note on {I}sing random currents,
			{I}sing-{FK}, loop-soups and the {G}aussian free field}, Electronic Communications in Probability \textbf{21} (2016), 7 pp.
		
		\bibitem[Lyo95]{Lyons95}
		R.~Lyons, \emph{Random walks and the growth of groups}, Comptes rendus de
		l'Acad{\'e}mie des sciences. S{\'e}rie 1, Math{\'e}matique \textbf{320}
		(1995), no.~11, 1361--1366.
		
		\bibitem[Men86]{Men86}
		M.~V.~Menshikov, \emph{Coincidence of critical points in percolation problems},
		Doklady Akademii Nauk SSSR \textbf{288} (1986), no.~6, 1308--1311.
		
		\bibitem[MP05]{MorPer05}
		B.~Morris and Y.~Peres, \emph{Evolving sets, mixing and heat kernel bounds},
		Probability Theory and Related Fields \textbf{133} (2005), no.~2, 245--266.
		
		\bibitem[MP01]{MP01}	
		R.~Muchnik and I.~Pak, \emph{Percolation on Grigorchuk groups},
		Communications in Algebra \textbf{29} (2001), 661--671.
		
		\bibitem[Nek18]{Nek}
		V.~Nekrashevych, \emph{Palindromic subshifts and simple periodic groups of intermediate growth},
Annals of Mathematics, \textbf{187(3)}, 2018, 667--719. 

		\bibitem[Pei36]{Pei36}
		R.~Peierls, \emph{On {I}sing's model of ferromagnetism.}, Mathematical Proceedings of the Cambridge Philosophical Society \textbf{32} (1936), 477--481.
		
		\bibitem[Pit82]{Pit82}
		L.~D.~Pitt, \emph{Positively correlated normal variables are associated.}, The Annals of Probability \textbf{10} (1982), no.~2, 496--499.
		
		\bibitem[RY17]{raoufi2017indicable}
		A.~Raoufi and A.~Yadin, \emph{Indicable groups and $p_c< 1$}, Electronic
		Communications in Probability \textbf{22} (2017).
		
		\bibitem[RS13]{RodSzn13}
		P.-F.~Rodriguez and A.-S.~Sznitman, {\em Phase Transition and Level-Set Percolation for the Gaussian Free Field}, Communications in Mathematical Physics {\bf 320} (2013), no.~2, 571--601.
		
		\bibitem[Szn12]{Szn12}
		A.-S. Sznitman, { \em An isomorphism theorem for random interlacements.}, Electronic Communications in Probability {\bf 17} (2012).
		
		\bibitem[Tei16]{teixeira2016percolation}
		A.~Teixeira, \emph{Percolation and local isoperimetric inequalities},
		Probability Theory and Related Fields \textbf{165} (2016), no.~3--4, 963--984.
		
		\bibitem[Tim07]{timar2007cutsets}
		{\'A}.~Tim{\'a}r, \emph{Cutsets in infinite graphs}, Combinatorics, Probability
		and Computing \textbf{16} (2007), no.~1, 159--166.
		
		\bibitem[Tro84]{Tro84}
		V.~I.~Trofimov, \emph{Graphs with polynomial growth}, Matematicheskii Sbornik (N.S.)
		\textbf{123(165)} (1984), no.~3, 407--421.
		
		\bibitem[Var85]{Var85}
		N.~Varopoulos, \emph{Isoperimetric inequalities and markov chains}, Journal of
		Functional Analysis \textbf{63} (1985), no.~2, 215--239.
		
		\bibitem[Woe00]{woess2000random}
		W.~Woess, {\em Random walks on infinite graphs and groups}, volume 138, Cambridge university press, 2000.
		
	\end{thebibliography}
\end{document}